\title{
Effectiveness 
of Walker's Cancellation Theorem}
\date{\today}
\author{Layth Al-Hellawi, Rachael Alvir, Barbara F. Csima, Xinyue Xie}
\DeclareMathOperator{\Q}{\mathbb{Q}}
\theoremstyle{plain}
\declaretheorem[name={Theorem},numberwithin=section]{theorem}
\declaretheorem[name={Lemma}, sibling=theorem]{lemma}
\declaretheorem[name={Claim}, numberwithin=theorem]{claim}
\theoremstyle{definition}
\declaretheorem[name={Question}]{question}
\begin{document}

\maketitle

\begin{abstract}
Walker's Cancellation theorem for abelian groups tells us that if $A$ is finitely generated and $G$ and $H$ are such that $A \oplus G \cong A \oplus H$, then $G \cong H$. Michael Deveau showed that the theorem can be effectivized, but not uniformly. In this paper, we expand on Deveau's initial analysis to show that the complexity of uniformly outputting an index of an isomorphism between $G$ and $H$, given indices for $A$, $G$, $H$, the isomorphism between $A \oplus G$ and $A \oplus H$, and the rank of $A$, is $\mathbf{0'}$. 
\end{abstract}

\section{Introduction}

Walker's Cancellation Theorem (WCT) was proved separately by Cohn \cite{Cohn} and Walker \cite{Walker} in 1956 using similar techniques, answering a question of Kaplansky's from his book \emph{Infinite Abelian Groups} \cite{Kaplansky}. The theorem concerns when it is possible to cancel the first term of isomorphic direct sums of two abelian groups. Here, the direct sum of two groups $G$ and $H$ will be written $G \oplus H$, and  throughout, all groups are abelian. The exact statement is as follows: 


\begin{theorem}
Let $A$ be finitely generated, and let $G$ and $H$ be groups. If $A \oplus G \cong A \oplus H$, then $G \cong H$.
\end{theorem}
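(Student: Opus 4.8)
The plan is to follow the classical strategy of Cohn and Walker: reduce to the case where $A$ is cyclic, and then build an isomorphism $G \cong H$ directly from the given isomorphism $A \oplus G \cong A \oplus H$. First I would invoke the structure theorem for finitely generated abelian groups to write $A \cong C_1 \oplus \cdots \oplus C_n$, where each $C_i$ is cyclic---either a copy of $\Z$ or a copy of $\Z/p^k\Z$ for a prime power. Since $A \oplus G \cong C_1 \oplus (C_2 \oplus \cdots \oplus C_n \oplus G)$ and likewise for $H$, an induction on $n$ reduces the whole theorem to cancellation of a single cyclic group: if $C$ is cyclic and $C \oplus G \cong C \oplus H$, then $G \cong H$. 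Splitting a finite cyclic $C$ into its prime-power components (via the Chinese Remainder Theorem) and applying the same inductive device, it is enough to treat $C = \Z$ and $C = \Z/p^k\Z$.

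Next, I would fix a generator $c$ of $C$ and an isomorphism $\phi \colon C \oplus G \to C \oplus H$, and set $w = \phi(c,0) \in C \oplus H$. Because $(c,0)$ generates a direct summand of $C \oplus G$ isomorphic to $C$ whose complement is $\{0\} \oplus G$, the image $w$ generates a direct summand of $C \oplus H$ isomorphic to $C$, with complement $\phi(\{0\} \oplus G) \cong G$. Since any two complements of a fixed direct summand are isomorphic to the quotient by that summand, this already gives $G \cong (C \oplus H)/\langle w \rangle$. The theorem will therefore follow once I show that this quotient is isomorphic to $H$.

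This last step is the crux, and I expect it to be the main obstacle. I must prove that whenever $w \in C \oplus H$ generates a direct summand isomorphic to $C$, the quotient $(C \oplus H)/\langle w \rangle$ is isomorphic to $H$---even when $w$ is not the standard generator $(c,0)$. When the $C$-coordinate of $w$ is itself a generator of $C$ the quotient map is transparent and one reads off $H$ immediately; the real difficulty is the remaining case, where the $C$-coordinate of $w$ fails to generate $C$ yet $w$ still spans a summand $\cong C$. Here I would exploit a number-theoretic divisibility property special to cyclic groups to construct the required isomorphism explicitly. This is precisely the point at which finite generation of $A$ is indispensable: for a general group $C$ the analogous statement is false, which is exactly why cancellation can fail without the finite-generation hypothesis.
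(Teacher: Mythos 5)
Your overall strategy---reduce to a single cyclic summand via the structure theorem and induction, then handle the cyclic case---is the classical Cohn--Walker route. The paper does not reprove Theorem 1.1 (it cites Cohn and Walker), but the group-theoretic core of the cyclic case appears as its Lemma \ref{decomposing G and H}, and the reduction from finitely generated to cyclic is carried out in the proof of Theorem \ref{main theorem part 1}. Your reduction steps are fine, as is the observation that $w = \phi(c,0)$ generates a direct summand of $C \oplus H$ isomorphic to $C$ with complement isomorphic to $G$, so that $G \cong (C \oplus H)/\langle w \rangle$.

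The gap is that your ``last step'' is not a step but a restatement of the theorem. The claim that $(C \oplus H)/\langle w\rangle \cong H$ whenever $w$ generates a direct summand isomorphic to $C$ is exactly cancellation of the cyclic group $C$: writing $X$ for a complement of $\langle w\rangle$, it asserts that $C \oplus H \cong \langle w\rangle \oplus X \cong C \oplus X$ forces $X \cong H$. You correctly locate the difficulty here, but then defer it to an unspecified ``number-theoretic divisibility property,'' so the essential content is never proved; splitting into the cases where the $C$-coordinate of $w$ does or does not generate $C$ does not by itself produce the isomorphism. To close the gap you need an argument such as the one in Lemma \ref{decomposing G and H}: work inside the single group $E = C \oplus H = \langle w\rangle \oplus X$, set $D = X \cap H$, and use the second isomorphism theorem to embed $X/D$ into $E/H \cong C$ and $H/D$ into $E/X \cong \langle w\rangle \cong C$. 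Both quotients are then cyclic, giving $X = U \oplus D$ and $H = V \oplus D$ with $U, V$ cyclic, and $C \oplus U \cong E/D \cong C \oplus V$ forces $U \cong V$ (by counting orders when $C$ is finite, and by comparing ranks and torsion when $C \cong \mathbb{Z}$), whence $X \cong H$. Without some such argument, your proposal establishes only the easy reductions and leaves the theorem itself unproved.
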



Noah Schweber and Matthew Harrison-Trainor asked if Walker's result could be effectivized. That is, they asked if the theorem still holds under the assumption that $A,G,$ and $H$ are computable structures and the requirement that the resulting isomorphism is a computable function. For a discussion of common notions from computability theory, including computable structures and functions, see the book by Ash and Knight \cite{Knight}.

In his thesis \cite{Deveau}, Michael Deveau positively answered the non-uniform version of this question, and showed that a uniform result could be obtained under extra conditions. He also showed that these extra conditions were necessary, that is, that there was no uniform way to obtain the desired isomorphism effectively. Deveau then asked if the complexity of uniformity was some natural complexity class, such as that of $\mathbf{0'}$, which we investigate here. 








 
Before stating our results in full, we introduce a simplification of the problem. Instead of considering  $A \oplus G \cong A \oplus H$, we fix some copy $E$ of this isomorphism type and identify $A \oplus G$ and $A \oplus H$ with their images under some fixed isomorphisms to $E$. Note that $A$ may have different images under these isomorphisms, so we write $E = A \oplus G = B \oplus H$ with $A \cong B$ for clarity.

Deveau's first theorem can now be stated formally as follows:

\begin{theorem}
    Let $E$ be a computable group with  $E = A \oplus G = B \oplus H$, where $A, B$ are isomorphic and finitely generated, and that as sets, the subgroups $A$, $B$, $G$, and $H$ are computable relations on $E$. Then $G, H$ are computably isomorphic.  
    

    
Suppose further that $A$ and $B$ are finite with generators $\bar{a},\bar{b}$ respectively. Then there
 is a single computable function $F$ which, when given  the indices for any such $E, A, B, G, H$, an index of a computable isomorphism $f: A \rightarrow B$, and the generators $\bar{a},\bar{b}$ as input, produces the index of a computable isomorphism between $G$ and $H$.
    
\end{theorem}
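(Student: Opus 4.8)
The plan is to prove the non-uniform conclusion by effectivizing the classical cancellation argument, and then to obtain the uniform function $F$ by checking that, once $A$ and $B$ are finite with their generators given, every ingredient of that argument becomes uniformly computable. Throughout I would work inside the fixed computable copy $E$, using that $A,B,G,H$ are computable relations on $E$.

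First I would set up computable projections. Since $A$ is a computable subset of $E$ and $E=A\oplus G$, the map $\pi_A\colon E\to A$ sending each $x$ to its unique $A$-component is computable: enumerate $A$ and, for each candidate $a$, test the condition $x-a\in G$; exactly one $a$ passes and the search halts. Hence $\pi_A,\pi_G,\pi_B,\pi_H$ are all computable. In particular the natural ``restrict-and-project'' homomorphism $\theta=\pi_H|_G\colon G\to H$ is a computable homomorphism, with $\ker\theta=G\cap B$ and with $H/\theta(G)\cong\pi_H(A)/(\pi_H(A)\cap\theta(G))$; both of these ``defects'' sit inside the finite groups $A$ and $B$, so both are finite and computable.

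Next I would carry out the cancellation itself. Using the structure of the finitely generated group $A$ together with the isomorphism $f$, I would reduce to cancelling one indecomposable summand at a time, so that the core case is a single cyclic ($p$-power, or free) summand. The heart of Walker's theorem is that although $\theta$ need not be an isomorphism, it can be repaired: one adjusts $\theta$ by a correction that lives entirely in the finite, fully understood groups $A$ and $B$ (transported by $f$), simultaneously absorbing the kernel $G\cap B$ and filling the finite cokernel, to produce a genuine isomorphism $G\to H$. Because the correction is determined by finite data on which the group operation is computable, the repaired map is computable and an index for it can be read off; this establishes $G\cong_c H$.

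The uniform part then follows by bookkeeping. When $A$ and $B$ are finite with generators $\bar{a},\bar{b}$, I can uniformly compute the finite sets $A=\langle\bar{a}\rangle$ and $B=\langle\bar{b}\rangle$ by closing the generators under the computable group operation, a search that terminates precisely because $A,B$ are finite, and hence compute $|A|$, the isomorphism type of $A$, the projections, and every choice made above. Feeding these into the construction and applying the $s$-$m$-$n$ theorem yields the single computable function $F$ of the statement. The main obstacle is the repair step: proving that the adjustment needed to turn $\theta$ into an isomorphism genuinely lives in the computable finite part and that the resulting map is verifiably a bijective homomorphism, so that a correct index can be emitted without consulting a halting oracle. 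This is exactly where finiteness of $A$ (rather than mere finite generation) and the explicit generators are essential, and is what separates this computable, uniform case from the general situation governed by $\mathbf{0'}$.
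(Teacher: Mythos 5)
There is a genuine gap, and it sits exactly at the step you flag as ``the main obstacle'': the repair of $\theta=\pi_H|_G$ is never actually carried out, and the repair \emph{is} the cancellation theorem. Your setup of the computable projections is fine (for $x\in E$ one searches for the unique $a\in A$ with $x-a\in G$, so $\pi_A,\pi_G,\pi_B,\pi_H$ are computable), but from there you only assert that $\theta$ ``can be repaired by a correction living in the finite part,'' without saying what the correction is or why it exists. Worse, the justification you offer is false in the generality of the first claim: there $A$ and $B$ are merely finitely generated, not finite, so $\ker\theta=G\cap B$ is a subgroup of $B$ that may well be infinite (it can have positive rank), and the cokernel need not be finite either. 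So the asserted ``finite, fully understood defects'' do not exist in the non-uniform part of the statement, and no mechanism is given for absorbing an infinite kernel. This also obscures where non-uniformity enters: if the correction really were determined by uniformly computable finite data, you would have proved the uniform statement for all finitely generated $A,B$, contradicting Deveau's theorem (cited in the paper) that no such uniform $F$ exists in that generality. The finite data that makes the non-uniform result work exists but cannot be found effectively; it must be hardcoded, and your proposal never identifies what that data is.

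For comparison, the argument the paper follows (Deveau, after Cohn) does not repair $\pi_H|_G$ at all. It sets $D=G\cap H$ and uses the second isomorphism theorem, $G/D\cong(G+H)/H\le E/H\cong B$, to reduce to the case of cyclic $A,B$, in which $G/D$ and $H/D$ are cyclic; choosing generators $u+D$ and $v+D$ gives $G=\langle u\rangle\oplus D$ and $H=\langle v\rangle\oplus D$ with $\langle u\rangle\cong\langle v\rangle$, and the isomorphism $ku+d\mapsto kv+d$ is computable because $D$ is a computable set and $k$ can be found by search. The non-uniform parameters are precisely the generators $u,v$, which exist but are not uniformly computable from the indices; when $A,B$ are finite these can be recovered uniformly, which is what rescues the second claim. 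If you want to salvage your route, you need to (i) state and prove the actual correction lemma for a single cyclic summand, and (ii) isolate explicitly which finitely many non-computable choices it consumes, since those are what must be supplied non-uniformly in part one and computed from $\bar a,\bar b$ in part two.
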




Deveau's proof followed Cohn's proof while keeping track of what was computable along the way. His first result could not be made uniform because his construction of an isomorphism between $G$ and $H$ required hardcoded generators of cyclic subgroups of $G$ and $H$. However, in the case that $A$ and $B$ are finite, it is uniformly computable to find new decompositions $E=U\oplus G=U\oplus H$ where $U$ is some finite cyclic subgroup. Deveau also showed that, in the case that $A$ and $B$ are not necessarily finite, no such uniform computable $F$ exists, even allowing parameters that include the generators as above. This raises the question of what the complexity of such a uniform $F$ must be.  In this paper, we answer that question and show that the degree of uniformity is exactly $\mathbf{0}'$. In addition to the indices of the groups, the rank of $A, B$ is also provided as part of the input to $F$. Recall that the rank of an abelian group is the cardinality of a maximal linearly independent subset of the group \cite{Lang}. The main theorem that we will prove is formally stated as follows: 

\begin{theorem}\label{main theorem}
We are interested in the setting where $E$ is a computable group, $E = A \, \oplus \, G = B \oplus H$, and $A$ and $B$ are isomorphic and finitely generated.

\begin{enumerate}
    \item There is a $\mathbf{0}'$-computable function $F(e,a,b,g,h,r)$ such that if $\mathcal{G}_e$ is a computable group $E$, and $C_a$, $C_b$, $C_g$, $C_h$ are computable sets that when inheriting the group operation from $E$ are themselves groups $A$, $B$, $G$, and $H$, satisfying $E = A \oplus G = B \oplus H$, and the groups $A$ and $B$ are isomorphic and finitely generated with rank $r$, then $\varphi_{F(e,a,b,g,h,r)}$ is an isomorphism from $G$ to $H$.
    \item Suppose that $F$ is a function such that whenever $\mathcal{G}_e$ is a computable abelian group $E$, and $C_a$, $C_b$, $C_g$, $C_h$ are computable sets that when inheriting the group operation from $E$ are themselves groups $A, B, G,$ and $H$, satisfying $E = A \oplus G = B \oplus H$, and the groups $A$ and $B$ are isomorphic and finitely generated with rank $r$, we have $\varphi_{F(e,a,b,g,h,r)}$ is an isomorphism from $G$ to $H$. Then $\emptyset' \leq_T F$. 
\end{enumerate}
\end{theorem}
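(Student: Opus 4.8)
The plan is to prove the two halves separately. For the upper bound (part 1), I would start from Deveau's second theorem: in the special case where $A$ and $B$ are \emph{finite}, there is a single computable function producing the desired isomorphism, provided we supply indices together with an isomorphism $f : A \to B$ and the generators $\bar a, \bar b$. So the strategy is to reduce the general finitely-generated case to the finite case uniformly in $\mathbf{0}'$. Given a finitely generated abelian group $A$ of rank $r$, its torsion subgroup is finite and $A \cong \Z^r \oplus T(A)$. The idea is to absorb the free part $\Z^r$ into $G$ (and correspondingly $B$'s free part into $H$), rewriting $E = A \oplus G = (\Z^r \oplus T(A)) \oplus G = T(A) \oplus (\Z^r \oplus G)$, so that the new ``cancelled'' group is the \emph{finite} torsion subgroup. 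I expect the role of $\mathbf{0}'$ to be exactly in making the relevant finite objects computable: finding a generating set and the torsion/free splitting of a finitely generated subgroup, deciding which elements are torsion (an $\exists$-question, hence $\Sigma_1$ and decidable from $\emptyset'$), computing the finite torsion subgroup $T(A)$ as a computable set with an index, and producing an isomorphism $f$ between the two finite torsion subgroups. Once these finite data and an index for the finite-case function $F_{\mathrm{fin}}$ are assembled, composing with Deveau's uniform function yields the index of an isomorphism $G \to H$, all computed by a $\mathbf{0}'$-oracle. The knowledge of the rank $r$ in the input is what lets us know \emph{how many} free generators to strip off and recognize when we have found a maximal independent set.

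For the lower bound (part 2), the plan is a standard coding argument: I would show $\emptyset' \leq_T F$ by building, uniformly, a computable family of instances $E = A \oplus G = B \oplus H$ in which the answer computed by $F$ encodes membership in $\emptyset'$. Concretely, for each $n$ I would construct a computable group $E_n$ (with computable subgroups $A,B,G,H$ of the required form, $A,B$ finitely generated and isomorphic of known rank) so that the isomorphism type of $G$ versus $H$, or some coarse computable invariant extractable from an index of an isomorphism $G \to H$, differs according to whether $n \in \emptyset'$. The construction would typically hang a cyclic summand of order depending on whether $n$ ever enters $\emptyset'$: while $n \notin \emptyset'_s$ we build one configuration, and if $n$ enters at stage $s$ we switch, using the freedom that Walker cancellation still forces $G \cong H$ but the \emph{specific} isomorphism index that $F$ is obliged to output must reflect the finite change. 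Then, given the index $F(e,a,b,g,h,r)$ for instance $n$, one reads off from $\varphi_{F(\dots)}$ (evaluated on finitely many fixed generators) a bit determining $n \in \emptyset'$; since $F$ is an honest total function whose output is always a genuine isomorphism, this readout is correct, yielding a Turing reduction $\emptyset' \leq_T F$.

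The main obstacle, and where the real work lies, is the lower bound construction: I must arrange a uniformly computable sequence of instances in which $F$ is \emph{forced} to reveal $\emptyset'$ while every instance genuinely satisfies the hypotheses ($A \cong B$ finitely generated of a fixed, \emph{known} rank $r$, with $A,B,G,H$ computable subgroups and $E = A\oplus G = B\oplus H$ a computable decomposition). The tension is that $F$ is only required to output \emph{some} isomorphism, so the coding must ensure that \emph{any} correct isomorphism index leaks the bit; this is precisely the phenomenon Deveau exploited to show no uniform computable $F$ exists, and I would adapt his non-uniformity witnesses so that they stratify along an enumeration of $\emptyset'$. Making the rank constant across the family (so $r$ is available as harmless input and cannot itself be used to ``hide'' the approximation) is the delicate point: the hardcoded-generator obstruction from Deveau's analysis must be replayed as a noncomputable search whose outcome is exactly $\emptyset'$-complete, so that both bounds meet at $\mathbf{0}'$.
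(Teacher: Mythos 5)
Your plan for part (1) has a genuine gap. You propose to absorb the free part of $A$ into $G$, rewriting $E = A \oplus G = T(A) \oplus (\mathbb{Z}^r \oplus G)$ and $E = T(B) \oplus (\mathbb{Z}^r \oplus H)$, and then invoke the uniform finite-case result to cancel the finite torsion subgroups. But cancelling $T(A) \cong T(B)$ only yields an isomorphism between the \emph{new} complements, i.e.\ $\mathbb{Z}^r \oplus G \cong \mathbb{Z}^r \oplus H$. You still have to cancel a free abelian group of rank $r$, and that is exactly the case your reduction was designed to avoid; it is also where the real difficulty (and the need for $\mathbf{0}'$) lives, since Deveau's non-uniformity results are witnessed by infinite cyclic $A$ and $B$. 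The paper instead proves a cancellation theorem for a single cyclic summand that covers \emph{both} the finite and the infinite cyclic case (writing $G = \langle u \rangle \oplus D$ and $H = \langle v \rangle \oplus D$ with $D = G \cap H$, and using $\mathbf{0}'$ to find $u$ and $v$ --- in the infinite case via the characterization of generators of $\mathbb{Z}$ as the non-divisible elements), and then iterates it $k + r$ times along the invariant factor decomposition, which is itself found $\mathbf{0}'$-computably using the given rank. Some version of the infinite cyclic cancellation step is unavoidable in your approach, so as written the argument does not close.

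For part (2), your outline is the same strategy as the paper's: build a uniformly computable family of instances, indexed by $e$, whose isomorphism data changes when $e$ enters $K$, and read the bit off of $\varphi_{F(\dots)}$ applied to a fixed generator. However, you stop at the level of ``adapt Deveau's non-uniformity witnesses,'' which is precisely the content that needs to be supplied. The paper's witness is concrete and quite simple: work inside $\mathbb{Q}^2$ with $A = H = \langle (1,0) \rangle$ and $B = G = \langle (0,1) \rangle$ while $e \notin K_s$, and switch to $A = H = \langle (\tfrac{1}{2},0) \rangle$ if $e$ enters $K$; then any isomorphism $f : G \to H$ must send $(0,1)$ to $\pm(1,0)$ in the first case and to $\pm(\tfrac{1}{2},0)$ in the second, so evaluating $\varphi_{F(\dots)}$ at $(0,1)$ decides $e \in K$. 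All four groups are infinite cyclic, so the rank is constantly $1$ across the family, which addresses the ``harmless input'' concern you raise. You would need to exhibit such a construction (or an equivalent one) for the lower bound to be complete.
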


The proof consists of two parts. In section 2, we prove (1). In section 3, we prove (2). 




\section{ Upper Bound}








In this section, we prove (1) of Theorem \ref{main theorem}, which roughly speaking gives an upper bound on the complexity of uniformity.

 We first prove the following theorem, which concerns a computable group $E = A \oplus G = B \oplus H$ where $A, B$ are cyclic. Note that here, the rank of $A$ and $B$ are not given as parameters. It is a stronger result than what is needed to prove (1) of Theorem \ref{main theorem}.






\begin{theorem}\label{cyclic theorem}
There is a $\mathbf{0}'$-computable function $F(e,a,b,g,h,r)$ such that if $\mathcal{G}_e$ is a computable group $E$, and $C_a$, $C_b$, $C_g$, $C_h$ are computable sets that when inheriting the group operation from $E$ are themselves groups $A$, $B$, $G$, and $H$, satisfying $E = A \oplus G = B \oplus H$,  and the groups $A$ and $B$ are isomorphic and cyclic, then $\varphi_{F(e,a,b,g,h,r)}$ is an isomorphism from $G$ to $H$.
\end{theorem}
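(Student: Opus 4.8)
The plan is to use the oracle $\mathbf{0}'$ only to extract the data that Deveau's uniform theorem (the computable function $F$ for finite $A,B$) requires — generators of $A$ and $B$ together with the order of the cyclic groups — and then to hand control to an effective form of Cohn's construction. The first step is to decide, with essentially a single oracle query, whether the cyclic groups are finite or infinite. I would search the computable set $C_a$ for a nonzero element $a_0$, treating the degenerate case $A = \{0\}$ (where $G = E = H$ and the identity works) separately. Since $A$ is cyclic, it is either $\cong \Z$, in which case \emph{every} nonzero element has infinite order, or $\cong \Z/n\Z$, in which case every element has finite order; hence the single $\Pi^0_1$ question ``$\forall k \geq 1,\ k a_0 \neq 0$'' decides the isomorphism type. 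This is exactly where cyclicity is essential: for an arbitrary computable set ``is it finite'' is $\Sigma^0_2$, but cyclicity lets one nonzero element settle the question, keeping the procedure inside $\mathbf{0}'$.

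Next I would locate generators of $A$ and $B$ with $\mathbf{0}'$. The key observation is that ``$a$ generates $A$'' is $\Pi^0_1$ in each case, so a generator can be found by enumerating $C_a$, testing each candidate against the oracle, and halting at the first success (a generator exists by hypothesis). In the infinite case, $a$ generates $A \cong \Z$ iff $a$ is not a proper multiple, i.e. $\forall j \geq 2\,\forall c \in A\,(a \neq jc)$, which is $\Pi^0_1$ because the matrix is decidable. In the finite case, once we know $A$ is finite we may compute $\operatorname{ord}(a)$ and thus list $\langle a \rangle$ as a decidable finite set, so ``$a$ generates $A$'' becomes $\forall x \in A\,(x \in \langle a \rangle)$, again $\Pi^0_1$. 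Having fixed generators $a$ of $A$ and $b$ of $B$, the assignment $a \mapsto b$ determines a computable isomorphism $f \colon A \to B$ whose index I can produce by an $\mathbf{0}'$-computable search.

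With generators and $f$ in hand, the finite case is finished by feeding this data to the computable function of Deveau's second theorem, which outputs an index for a computable isomorphism $G \to H$; since all of the preprocessing was $\mathbf{0}'$-computable, the composite is a $\mathbf{0}'$-computable function, and it does not use the rank parameter $r$. The infinite case $A \cong B \cong \Z$ is not covered by that theorem, which assumes $A, B$ finite, so here I would carry out the effective analogue of Cohn's cancellation directly. Using the computable projections $\pi_A, \pi_B, \pi_G, \pi_H$ — computable because the summands are computable and the decomposition of each element is unique, so components are found by a halting search — I would express $b$ in the $A \oplus G$ coordinates and $a$ in the $B \oplus H$ coordinates and follow Cohn's substitution to build the isomorphism $G \to H$; this is computable once the generator is known, with ordinary integer coordinates replacing the modular arithmetic of the finite case, and the output index is obtained uniformly in $(e,a,b,g,h)$.

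The main obstacle is twofold. First is the complexity bookkeeping: one must verify that determining finiteness and locating generators stay within $\mathbf{0}'$, and the essential point is recognizing that cyclicity collapses the naively $\Sigma^0_2$ finiteness question and the naively $\Pi^0_2$ ``is a generator'' question down to $\Pi^0_1$ questions. Second is the infinite cyclic case: since Deveau's uniform result is stated only for finite $A, B$, I expect the real work to be checking that the effective construction underlying that theorem can be performed when $A \cong \Z$, given the generator, and that the resulting isomorphism index is produced uniformly in all inputs. Once both cases are uniform $\mathbf{0}'$-procedures, branching on the finiteness bit combines them into the single $\mathbf{0}'$-computable function $F$ required.
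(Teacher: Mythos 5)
Your finite case is sound: because $A$ is cyclic, a single $\Pi^0_1$ query about one nonzero element decides finiteness, a terminating $\mathbf{0}'$-search through $\Pi^0_1$ tests locates generators of $A$ and $B$, and Deveau's uniform computable function then finishes. The genuine gap is the infinite cyclic case, and it is not merely ``real work to be checked'': your claim that Cohn's construction ``is computable once the generator is known'' is false, and is refuted by Theorem \ref{part(2) with generators} of this paper. In that construction $A=\langle(1,0,0)\rangle$ and $B=\langle(2,1,2)\rangle$ are infinite cyclic with fixed, explicitly given generators; the projections you invoke are uniformly computable; and the cross-decompositions $b=2a+(g_1+2g_2)$ and $a=-2b+(h_1-h_2)$ are identical whether or not $e\in K$, since $g_1,g_2,h_1,h_2$ lie in $G$ and $H$ in both outcomes. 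Nevertheless any $F$ that uniformly outputs an isomorphism $G\to H$ from this data computes $\emptyset'$. So no fixed recipe built from the generators of $A,B$ together with the computable projections can succeed, and your branch for $A\cong\Z$ cannot be completed as described.

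What your plan is missing is that the isomorphism is not assembled from generators of $A$ and $B$ at all, but from generators $u,v$ of the cyclic complements of $D=G\cap H$ inside $G$ and $H$: one first proves $G=\langle u\rangle\oplus D\cong\langle v\rangle\oplus D=H$ (Lemma \ref{decomposing G and H}), then uses the oracle a \emph{second} time to search $G$ and $H$ themselves for suitable $u$ and $v$ --- deciding whether $G/D$ is finite and then testing the $\Pi^0_1$ ``not a generator of $G/D$'' condition (Lemma \ref{find u and v}) --- and only then is the map $ku+d\mapsto kv+d$ computable (Lemma \ref{existence of computable isomorphism}). This route handles finite and infinite $A$ uniformly, which is why the paper needs neither your case split nor Deveau's theorem as a black box; your divisibility test for generators of $\Z$ is essentially the paper's Step 2, Case 2, but it must be run inside $G/D$ rather than inside $A$.
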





The theorem will be proven in three steps. In Lemma \ref{decomposing G and H} below, we start by using group-theoretic techniques similar to those used by Deveau and Cohn to decompose the groups $G$ and $H$. Further computability-theoretic arguments are made in the proof of Lemma \ref{find u and v} to show how a $\mathbf{0'}$-oracle suffices to provide all necessary information. Finally,
 in Lemma \ref{existence of computable isomorphism}, we define the desired computable isomorphism between $G$ and $H$.

We now introduce a useful decomposition of the groups $G$ and $H$. Let $D$ be the intersection of the two groups. In particular, we show that each of $G$ and $H$ is a direct sum of $D$ and some cyclic subgroup. This decomposition will later be used to define a computable isomorphism, which maps the generators of the
 two cyclic groups to each other and is the identity on $D$.

\begin{lemma}\label{decomposing G and H}
Suppose $E$ is a group with $E = A \oplus G = B \oplus H$, where $A$ and $B$ are isomorphic and cyclic. Then there exist $u\in G$ and $v\in H$ such that $G=U\oplus D$\footnote{ Strictly speaking, by this we mean that $G$ is equal to the internal direct product of $U$ and $D$; similarly for $H$.}$\cong V\oplus D=H$, where $D=G\cap H$, $U=\langle u\rangle$ and $V=\langle v\rangle$.
\end{lemma}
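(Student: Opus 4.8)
The plan is to follow the group-theoretic strategy of Cohn and Deveau, extracting the decomposition from the projections attached to the two direct-sum structures and then matching orders. Throughout, write $\rho_A,\rho_G$ for the projections of $E$ onto $A,G$ along $E=A\oplus G$, and $\rho_B,\rho_H$ for the projections onto $B,H$ along $E=B\oplus H$, and set $D=G\cap H$.

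First I would identify the quotients $G/D$ and $H/D$. Restricting $\rho_B$ to $G$ yields a homomorphism $\rho_B|_G\colon G\to B$ whose kernel is exactly $\{g\in G: g\in H\}=G\cap H=D$, so $G/D$ embeds into $B$ and is therefore cyclic. Symmetrically, $\rho_A|_H\colon H\to A$ has kernel $H\cap G=D$, so $H/D$ embeds into $A$ and is cyclic. Thus each of $G$ and $H$ is an extension of a cyclic group by the common subgroup $D$, and the eventual $U,V$ should be cyclic complements of $D$ realizing these quotients.

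Next I would pin down that the two cyclic quotients have the same size. If $A\cong B$ are finite, then $|A|=|B|$ together with $|E|=|A|\,|G|=|B|\,|H|$ forces $|G|=|H|$, and since $D$ is common to both this gives $[G:D]=[H:D]$; if $A\cong B\cong\Z$, the analogous conclusion follows by comparing torsion-free ranks across $E=A\oplus G=B\oplus H$, with the degenerate case of a trivial quotient handled by checking directly that $G=H=D$. Hence $G/D\cong H/D$, generated respectively by cosets $u+D$ and $v+D$ for suitable $u\in G$, $v\in H$.

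The main obstacle is to upgrade these generators to an honest internal decomposition, that is, to choose $u\in G$ and $v\in H$ so that $\langle u\rangle\cap D=0$ and $\langle u\rangle+D=G$, and likewise for $v$. When $G/D$ is infinite cyclic this is automatic: the free quotient splits, and any lift of a generator meets $D$ trivially. When $G/D$ is finite of order $m$ it requires lifting a generator of $G/D$ to an element of $G$ of order exactly $m$, which I would secure by arguing one prime at a time and using the cyclicity of $A$ and $B$ to control the order of the preimages under $\rho_B|_G$ and $\rho_A|_H$, arranging $\operatorname{ord}(u)=[G:D]=[H:D]=\operatorname{ord}(v)$. With $u,v$ so chosen, $U=\langle u\rangle$ and $V=\langle v\rangle$ are cyclic complements of $D$, giving $G=U\oplus D$ and $H=V\oplus D$; and the assignment that is the identity on $D$ and sends $u\mapsto v$ is a well-defined isomorphism $G\to H$, well-defined because $\operatorname{ord}(u)=\operatorname{ord}(v)$ and bijective because it respects both direct-sum decompositions. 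This explicit isomorphism is precisely the object that the later lemmas will render computable using a $\mathbf{0}'$-oracle.
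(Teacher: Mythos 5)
Your overall strategy is the same as the paper's: show that $G/D$ embeds into $B$ (you via the kernel of $\rho_B|_G$, the paper via the second isomorphism theorem applied to $G/(G\cap H)\cong (G+H)/H\le E/H\cong B$ --- these are the same computation), conclude that $G/D$ and $H/D$ are cyclic of the same order, and then lift generators to internal decompositions $G=U\oplus D$, $H=V\oplus D$. To your credit, you explicitly flag the lifting step as ``the main obstacle,'' which the paper passes over in silence. But your proposed resolution --- produce $u\in G$ with $u+D$ generating $G/D$ and $\operatorname{ord}(u)=[G:D]$ by ``arguing one prime at a time'' --- is never actually carried out, and it cannot be, because such a lift need not exist. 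Take $E=\mathbb{Z}_2\times\mathbb{Z}_4$, $A=B=\langle(1,0)\rangle$, $G=\langle(0,1)\rangle$, $H=\langle(1,1)\rangle$. Then $E=A\oplus G=B\oplus H$ with $A\cong B\cong\mathbb{Z}_2$ cyclic, but $D=G\cap H=\langle(0,2)\rangle$ is the unique order-$2$ subgroup of $G\cong\mathbb{Z}_4$, so every $u\in G\setminus D$ has order $4>[G:D]=2$ and satisfies $\langle u\rangle\cap D\ne 0$; no decomposition $G=U\oplus D$ exists because $\mathbb{Z}_4$ is indecomposable.

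This example shows the lemma is false as stated, so the gap in your write-up is not one you could have closed; the paper's own proof makes the same unjustified leap at ``Define $U=\langle u\rangle$ so that $G=U\oplus D$'' (cyclicity of $G/D$ only gives $G=\langle u\rangle+D$, not that the extension of $D$ by $G/D$ splits). Any repair must weaken the conclusion, for instance to a statement along the lines of: there exist $u,v$ with $G=\langle u\rangle+D$, $H=\langle v\rangle+D$, and $m\cdot u\in D$ iff $m\cdot v\in D$ with $m\cdot u=m\cdot v$ for such $m$ (in the example, $2\cdot(0,1)=2\cdot(1,1)=(0,2)$) --- which is the property actually needed for the map $k\cdot u+d\mapsto k\cdot v+d$ of Lemma~\ref{existence of computable isomorphism} to be well defined. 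A smaller point: your order-matching step ``$|E|=|A|\,|G|=|B|\,|H|$ forces $|G|=|H|$, and since $D$ is common this gives $[G:D]=[H:D]$'' is only meaningful when $E$ is finite; for infinite $G,H$ equality of cardinalities says nothing about the indices, and $[G:D]=[H:D]$ needs a separate argument.
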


\begin{proof}
Define $D=G\cap H$. By the second isomorphism theorem,
\[G/D\cong G/(G\cap H)\cong (G+H)/H\le E/H\cong B\]

Since $B$ is cyclic, $G/D$ must also be cyclic. Therefore, there exists an element $u\in G$ such that $G/D=\langle u+D\rangle$. Define $U=\langle u\rangle$ so that $G=U\oplus D$. Similarly, $H/D$ is cyclic. Define $v\in H$ and $V=\langle v\rangle$ analogously so that $H=V\oplus D$. Then, $E=A\oplus U\oplus D=B\oplus V\oplus D$. Taking a quotient by $D$ yields $A\oplus U\cong B\oplus V$. If $A$ and $B$ are infinite cyclic groups, then $U$ and $V$ are either both infinite cyclic or both trivial, and hence are isomorphic. If $A$ and $B$ are finite cyclic groups, then $U$ and $V$ are both finite. We have $|A||U|=|A\oplus U|=|B\oplus V|=|B||V|$. Since $A\cong B$, we have $|A|=|B|$, which implies $|U|=|V|$. Hence, $U\cong V$. Therefore, $G=U\oplus D\cong V\oplus D=H$.
\end{proof}

Next, we prove that there is a $\mathbf{0'}$-computable procedure to find the generators $u$ and $v$ of the groups $U$ and $V$ as defined above. 
 The proof outlines the main steps of the procedure and shows that each step is $\mathbf{0'}$-computable. We proceed in two cases, depending on whether $U$ and $V$ are finite.

Throughout, we use the notation $n \cdot u$ as an abbreviation for the element $u$ added to itself $n$ times, to distinguish it from the multiplication of $n$ and $u$ as natural numbers. Note that since $E$ is computable, it is always computable to find an element $x$ such that $x = n \cdot u$.

\begin{lemma}\label{find u and v}
There is a $\mathbf{0'}$-computable function which, given indices for $E$, $A$, $B$, $G$ and $H$, outputs $u$ and $v$ as defined in Lemma \ref{decomposing G and H}.
\end{lemma}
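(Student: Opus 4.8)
The plan is to give a single uniform search that first uses the oracle to determine which of the two cases of Lemma \ref{decomposing G and H} we are in, and then produces $u$ (and symmetrically $v$) by a search whose halting condition is $\mathbf{0}'$-decidable. Throughout, recall that membership in $G$ and $H$, and hence in $D = G \cap H$, is computable, and that for any $x \in E$ and $n \in \N$ we can compute $n \cdot x$; so ``$x \in D$'' and ``$n \cdot x \in D$'' are computable predicates. Because we are promised that $A \cong B$ is cyclic, its finiteness is cheap to detect: a cyclic group is finite and nontrivial exactly when it contains a nonzero element of finite order. Thus ``$A$ is finite and nontrivial'', namely $\exists x \in A\, \exists n > 0\,(x \neq 0 \wedge n\cdot x = 0)$, is $\Sigma^0_1$, while ``$A = \{0\}$'' is $\Pi^0_1$; with $\mathbf{0}'$ we decide both and branch accordingly. (If $A$ is trivial then $G = H = E$ and we may output $u = v = 0$.)

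In the finite case, $G/D \cong U$ and $H/D \cong V$ are finite cyclic, so it suffices to find elements whose cosets have maximal order. First I would use $\mathbf{0}'$ to compute $m = |G/D|$: for successive $n$ the statement ``there exist $g_1, \dots, g_n \in G$ lying in pairwise distinct cosets of $D$'' is $\Sigma^0_1$ (the witnesses are searched for and the predicate $g_i - g_j \notin D$ is computable), so the oracle returns the largest $n$ for which it holds. Knowing $m$, the condition that $u + D$ generate $G/D$ becomes the finitary, computable requirement that $m\cdot u \in D$ while $k \cdot u \notin D$ for all $0 < k < m$; I then search $G$ for such a $u$, which exists by Lemma \ref{decomposing G and H}. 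The same procedure applied to $H$ produces $v$.

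In the infinite case, Lemma \ref{decomposing G and H} guarantees that $G/D$ is either trivial or infinite cyclic. Triviality is the $\Pi^0_1$ condition $G \subseteq H$, decidable by $\mathbf{0}'$; if it holds we output $u = 0$. Otherwise $G/D \cong \Z$, and the crux is to avoid the naive formulation of ``generates'', which quantifies $\forall g \in G\, \exists n$ and is only $\Pi^0_2$. Instead I would use that an element of $\Z$ is a generator precisely when it is divisible by no integer $\geq 2$: since ``$u + D$ is divisible by $n$ in $G/D$'' unwinds to $\exists w \in G\,(u - n\cdot w \in D)$, the statement that $u + D$ generates $G/D$ is equivalent to the $\Pi^0_1$ condition $\forall n \geq 2\, \forall w \in G\,(u - n\cdot w \notin D)$. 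This is decidable by $\mathbf{0}'$, so I search $G$ for a $u$ satisfying it; such a $u$ exists, and any $u$ satisfying it has coset $\pm 1$ and hence generates. The symmetric search in $H$ yields $v$.

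The main obstacle is exactly this last reduction: ``$u + D$ generates $G/D$'' is prima facie $\Pi^0_2$ and thus beyond $\mathbf{0}'$, and the uniform procedure only succeeds because in the infinite case the torsion-freeness of $\Z$ lets us replace ``generates'' by the $\Pi^0_1$ indivisibility test, while in the finite case a preliminary $\mathbf{0}'$-computation of the order $|G/D|$ collapses the generation condition to a finite check. The divergence of the two methods is essential, since the indivisibility characterization fails in finite cyclic groups (there every element is divisible by each integer coprime to the group order, so no element passes the test), which is why the argument must branch on finiteness first.
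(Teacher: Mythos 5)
Your proposal is correct and follows essentially the same strategy as the paper: use $\mathbf{0}'$ to decide finiteness first, then in the infinite case characterize generators of $G/D \cong \Z$ by the $\Pi^0_1$ indivisibility condition, and in the finite case reduce generation to a condition $\mathbf{0}'$ can decide. The only cosmetic differences are that you branch on the finiteness of $A$ rather than of $G/D$ directly, and in the finite case you first compute $|G/D|$ with the oracle so that the generator test becomes fully computable, whereas the paper instead expresses ``$u+D$ is not a generator'' directly as a $\Sigma_1$-over-the-diagram statement with a bounded universal quantifier inside.
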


\begin{proof}
We describe a $\mathbf{0'}$-computable procedure which, with access to the given parameters, can output $u$ and $v$. 


The first step is to decide if $G/D$, as defined in Lemma 2.2, is finite. Recall that a nontrivial cyclic group is finite if and only if a non-identity element has finite order.  Using this, we obtain the following.  



\textbf{Step 1:} Decide if $G/D$ is finite and nontrivial. 

We note that this set is finite iff the following fact is true of $E$. 

\begin{align*}
G/D\text{ is finite but not trivial }\iff&\exists x\in G \, \exists n \in \omega [n \cdot (x+D)=D\land x \not \in D \land n \neq 0] \\
\iff&\exists x\in G \, \bigvee_{n > 0}[n \cdot x\in D\land x \not \in D ]
\end{align*}

To decide if this fact is true of $E$, first consider the following computable process. Begin by listing all the triples $(x, d, n)$ where $x$ in $G-D$, $d \in D$, and $n > 0$, which we can do since the sets $G$ and $H$ are both computable. As we list, we continually check the atomic diagram of $E$ to see if $n \cdot x = d$ is true of $E$. If so, we then halt - and if not this process continues forever. However, the halting set as oracle can tell us whether this process halts.\footnote{Throughout this paper, similar techniques suffice to show that $\mathbf{0'}$ can decide such statements that are not technically $\Sigma_1$ sentences in the language of arithmetic nor $\Sigma_1$ sentences in the language of groups.}


We now proceed to find the elements $u$ and $v$. We will describe how to find $u$; the process for finding $v$ is analogous. We show below that $\emptyset'$ as oracle can decide for a given $u$ if $u + D$ is not a generator of $G/D$. Depending on the outcome of step 1, we do this in one of two ways. Since $G/D$ actually is cyclic, to find a desired $u$ we merely test elements until the condition is met.


\textbf{Step 2:} Find a generator $u + D$ of $G/D$.

Case 1: $G/D$ is finite but not trivial. Then, for all $u\in (G-H)\cup\{0\}$,
\begin{align*}
&u+D\text{ is not a generator of }G/D\\
\iff&\exists\tilde{u}\in G-H \, [\tilde{u}+D\notin\langle u+D\rangle]\\
\iff&\exists\tilde{u}\in G-H \, \exists n \in \omega [n \cdot (u+D)=D\land n\neq 0\land\forall0\le m<n[\tilde{u}+D\neq m \cdot (u+D)]]\\
\iff&\exists\tilde{u}\in G-H \, \exists n \in \omega [n \cdot u\in D\land n\neq 0\land\forall0\le m<n \, [(\tilde{u}-m \cdot u)\notin D]]
\end{align*}
As in step 1, the halting set can determine if this fact is true of $E$. \\

Case 2: $G/D$ is infinite or trivial.

Recall that $G/D$ is cyclic, and that in an infinite cyclic group, only the generators are not divisible by some natural number $n$ for $n > 1$. Therefore, for all $u\in (G-H)\cup\{0\}$,
\begin{align*}
&u+D\text{ is not a generator of }G/D\\
\iff&\exists\tilde{u}\in G-H \, \exists n \in \omega[u+D=n \cdot (\tilde{u}+D)\land n > 1]\\
\iff&\exists\tilde{u}\in G-H \, \exists n \in \omega [(u-n \cdot \tilde{u})\in D\land n > 1]
\end{align*}


\end{proof}

Once $u$ and $v$ are found, we can build a computable isomorphism between $G$ and $H$, as described in the following proof.

\begin{lemma}\label{existence of computable isomorphism}
Suppose $E$ is a computable abelian group with $E = A \oplus G = B \oplus H$, where $A$ and $B$ are isomorphic and cyclic. Then uniformly in indices for $E$, $A$, $B$, $G$, $H$ and some $u$ and $v$ that are as guaranteed by Lemma \ref{decomposing G and H}, we can give a computable isomorphism between $G$ and $H$.
\end{lemma}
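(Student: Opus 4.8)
The plan is to build the isomorphism that sends the cyclic generator $u$ to $v$ while fixing $D = G \cap H$ pointwise, exactly as suggested by the decomposition in Lemma \ref{decomposing G and H}. Since $G = U \oplus D$ is an internal direct sum, every $g \in G$ has a representation $g = n \cdot u + d$ with $d \in D$, where $n$ ranges over $\Z$ (and is unique modulo the order of $u$ when $U$ is finite). I would define $\varphi(g) = n \cdot v + d$ and then show that this $\varphi$ is a computable isomorphism, uniformly in the given data.

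First I would address well-definedness. If $g = n_1 \cdot u + d_1 = n_2 \cdot u + d_2$ are two such representations, then $(n_1 - n_2) \cdot u = d_2 - d_1 \in U \cap D = \{0\}$, so $n_1 \cdot u = n_2 \cdot u$ and $d_1 = d_2$. Because Lemma \ref{decomposing G and H} guarantees $U \cong V$, the groups $\langle u \rangle$ and $\langle v \rangle$ have the same order (both infinite, or both finite of equal size), so $n_1 \cdot u = n_2 \cdot u$ forces $n_1 \cdot v = n_2 \cdot v$; hence the value $n \cdot v + d$ does not depend on the chosen representation.

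Next, computability. To evaluate $\varphi$ on an input $g \in G$, I would search through $n \in \Z$ (enumerated as $0, 1, -1, 2, -2, \dots$) and test whether $g - n \cdot u \in D$. Each test is computable: $E$ is computable, so $n \cdot u$ and $g - n \cdot u$ are computable, and $D = G \cap H$ is computable as the intersection of the computable sets $C_g$ and $C_h$. The search halts because a decomposition $g = n \cdot u + d$ exists, and by the previous paragraph the output $n \cdot v + (g - n \cdot u)$ is independent of which witnessing $n$ is found first. All of this depends only on the supplied indices for $E$, $G$, $H$ together with $u$ and $v$, so the procedure is uniform, yielding the uniformity asserted in the statement.

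Finally, I would verify that $\varphi$ is an isomorphism. It is a homomorphism, since adding representations gives $(n_1 + n_2) \cdot u + (d_1 + d_2)$ with $d_1 + d_2 \in D$. It is a bijection because the analogous map $\psi(m \cdot v + d) = m \cdot u + d$ from $H$ to $G$, defined and computed by the symmetric search, is a two-sided inverse. I expect the only real subtlety — the main obstacle — to be folding the finite and infinite cyclic cases into a single argument: the map and the evaluation procedure are defined identically in both cases, and it is precisely the isomorphism $U \cong V$ from Lemma \ref{decomposing G and H} that makes $\varphi$ well-defined and invertible in each, so the computable procedure never needs to branch on whether $U$ is finite.
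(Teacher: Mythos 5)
Your proposal is correct and follows essentially the same route as the paper: decompose $g = n\cdot u + d$ via $G = U \oplus D$, map it to $n \cdot v + d$, and compute $n$ by searching until $g - n\cdot u \in D$. Your treatment of well-definedness (uniqueness of $n$ only modulo the order of $u$, resolved by $U \cong V$) is in fact more careful than the paper's, which asserts a unique $k \in \omega$ without comment.
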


\begin{proof}
Define the following function $f:G\to H$. Given any $g\in G$, since $G=U\oplus D$ by Lemma \ref{decomposing G and H}, there exists a unique $k\in\omega$ such that $g=ku+d$ for some $d\in D$. We define $f(g)=kv+d$, which is in $H$ because $H=V\oplus D$ by Lemma \ref{decomposing G and H}. One can easily check that $f$ is an isomorphism. Moreover, since $u$ and $v$ are known, $D$ is computable, and $k$ is guaranteed to exist, $f$ can search for $k$ until $(g-ku)\in D$. Therefore, $f$ is a computable isomorphism.
\end{proof}

Now, we are ready to use the three lemmas above to prove Theorem \ref{cyclic theorem}.

\begin{proof}[Proof of Theorem \ref{cyclic theorem}]
We have lemma \ref{decomposing G and H} that guarantees a nice decomposition of $G=\langle u\rangle\oplus D$ and $H=\langle v\rangle\oplus D$. Lemma \ref{find u and v} presents a method for finding $u$ and $v$ using a $\mathbf{0'}$-oracle.
 Lastly, knowing $u$ and $v$, a computable isomorphism can be constructed, as in the proof of Lemma \ref{existence of computable isomorphism}. Therefore, $F$ is $\mathbf{0'}$-computable.
\end{proof}


In order to generalize from the case where $A$ and $B$ are cyclic to the case where $A$ and $B$ are finitely generated, we need to use the fundamental theorem of finitely generated abelian groups, which guarantees a cyclic decomposition of $A$ and $B$. The following lemma shows that if the rank of $A$ and $B$ is known, then it is $\mathbf{0'}$-computable
 to find such a decomposition by computing a corresponding generating set.

\begin{lemma}\label{cyclic decomposition}
If $A$ is a computable finitely generated abelian group with known rank $r$, then it is $\mathbf{0'}$-computable from an index for $A$ and the rank, to find generators $a^f_1, a^f_2, \ldots, a^f_k, a_1, a_2, \ldots, a_r$ of the invariant factor decomposition of $A$, that is, $A\cong\left(\bigoplus_{1\le i\le k}\langle a^f_i\rangle\right)\oplus\left(\bigoplus_{1\le i\le r}\langle a_i\rangle\right)$, for some $k$, where $q_i\coloneqq|a^f_i|<\infty$ for $i=1, 2, \ldots, k$, $|a_i|=\infty$ for $i=1, 2, \ldots, r$, and $q_i|q_{i+1}$ for $i=1, 2, \ldots, k-1$.
\end{lemma}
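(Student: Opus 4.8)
The plan is to treat the torsion part and the free part of $A$ separately. Writing $T$ for the torsion subgroup of $A$, the fundamental theorem of finitely generated abelian groups gives $A = T \oplus F$ with $F$ free of rank $r$, and the finite invariant factors $q_1 \mid \cdots \mid q_k$ of $A$ are exactly those of the finite group $T$. So I would first compute $T$ explicitly, read off its invariant factor decomposition $T = \langle a^f_1\rangle \oplus \cdots \oplus \langle a^f_k\rangle$ by a finite search, and then produce $a_1, \ldots, a_r \in A$ whose images in $A/T$ form a basis. Any such $a_i$ automatically satisfy $\langle a_1, \ldots, a_r\rangle \cap T = \{0\}$ and $T + \langle a_1, \ldots, a_r\rangle = A$, so that $A = T \oplus \langle a_1\rangle \oplus \cdots \oplus \langle a_r\rangle$ and the combined generators are as required.

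First I would show $\mathbf{0'}$ can produce $T$ as an explicit finite list. Since $A$ is finitely generated, $T$ is finite, and the torsion predicate ``$x \in T$'', i.e.\ $\exists n > 0 \,[\, n \cdot x = 0\,]$, is $\Sigma_1$. Hence for each $m$ the statement ``$|T| \ge m$'', asserting the existence of $m$ distinct torsion elements together with their order-witnesses, is $\Sigma_1$ and so decidable by $\mathbf{0'}$; querying $m = 1, 2, \ldots$ halts at the least $m$ with $|T| < m$ and returns $|T|$. Enumerating torsion elements until $|T|$ of them appear yields $T$ explicitly, with its inherited (computable) operation. The invariant factor decomposition of the now fully known finite group $T$ is then a finite, oracle-free computation, producing $a^f_1, \ldots, a^f_k$ with $q_i = |a^f_i|$ and $q_i \mid q_{i+1}$.

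The main obstacle is the free part: deciding whether elements $a_1, \ldots, a_r$ map to a basis of $A/T \cong \mathbb{Z}^r$. The obstruction is that generation of $\mathbb{Z}^r$ is naively $\Pi_2$ (``for all $x$ there exist coefficients''), which is beyond $\mathbf{0'}$. My key idea is to replace generation by a divisibility criterion that is merely $\Pi_1$: the images $\bar a_1, \ldots, \bar a_r$ form a basis of $A/T$ if and only if for every prime $p$ they are linearly independent over $\mathbb{F}_p$ in $(A/T)/p(A/T)$. One direction is clear; for the converse, independence modulo every $p$ forces the $a_i$ to be $\mathbb{Z}$-independent (a nontrivial integer relation, reduced to primitive coefficients, survives modulo any prime), so $L := \langle \bar a_1, \ldots, \bar a_r\rangle$ has finite index, and the hypothesis makes the finite group $(A/T)/L$ divisible, hence trivial. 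Crucially, once $T$ is known explicitly, the condition $\sum c_i a_i \in T + pA$ is $\Sigma_1$, so the whole criterion unwinds to a single purely universal statement over $p$, over coefficient tuples, and over divisibility witnesses, which is $\Pi_1$ and thus decidable by $\mathbf{0'}$.

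Finally I would search through $r$-tuples of elements of $A$, using $\mathbf{0'}$ to test each against the $\Pi_1$ criterion, and output the first tuple that passes; since $A/T$ is free of rank $r$ a valid tuple exists, so the search halts. Combining these $a_1, \ldots, a_r$ with the $a^f_1, \ldots, a^f_k$ obtained from $T$ gives the desired invariant factor decomposition, and every step uses only a $\mathbf{0'}$ oracle, uniformly in the index for $A$ and the rank $r$. The delicate point throughout is the reduction from $\Pi_2$ generation to the $\Pi_1$ ``independent modulo every prime'' test; I would double-check the finite-divisible-is-trivial step and the bookkeeping that keeps membership in $T + pA$ genuinely $\Sigma_1$.
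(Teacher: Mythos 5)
Your proposal is correct, and the overall architecture matches the paper's: first extract the (finite) torsion subgroup using $\mathbf{0'}$ to decide the $\Sigma_1$ question of whether more torsion elements remain, then handle its invariant factors by a finite search, then use $\mathbf{0'}$ to test candidate $r$-tuples for the free part until one succeeds. Where you genuinely diverge is in the test for the free part. The paper characterizes ``$\{a^f_1,\dots,a^f_k,a_1,\dots,a_r\}$ is \emph{not} a generating set'' as a $\Sigma_1$ condition: there exist another tuple $\tilde a_1,\dots,\tilde a_r$ and integer matrices $M,N$ with $\vec a = M\vec{a^f}+N\vec{\tilde a}$ and $\det(N)\neq\pm1$; the oracle then decides this existential fact for each candidate. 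You instead characterize ``the images of $a_1,\dots,a_r$ form a basis of $A/T$'' as the $\Pi_1$ condition that they remain linearly independent in $(A/T)/p(A/T)$ for every prime $p$, with correctness resting on the facts that a primitive integer relation survives mod every prime and that a finite divisible abelian group is trivial (equivalently, that the determinant of the coefficient matrix is a unit). Both conditions are decidable from $\mathbf{0'}$, so both work; your local-global test avoids quantifying over all alternative generating tuples and matrices and has a somewhat more self-contained correctness proof, while the paper's version negates ``generating set'' directly and matches the template of its earlier Lemma \ref{find u and v}, where ``not a generator'' is likewise rendered as a $\Sigma_1$ fact about $E$. Your remaining steps (triviality of $\langle a_1,\dots,a_r\rangle\cap T$, the internal direct sum decomposition, and termination of the search because a valid tuple exists) are all sound.
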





\begin{proof}
Since $A$ is finitely generated, by the fundamental theorem of finitely generated abelian groups, $A$ has a unique decomposition $A\cong\mathbb{Z}_{q_1}\oplus\mathbb{Z}_{q_2}\oplus...\oplus\mathbb{Z}_{q_k}\oplus\mathbb{Z}^r$ where $q_i|q_{i+1}$ for $i=1, 2, ... k-1$ and $r$ is the rank. The following $\mathbf{0'}$-computable algorithm produces a set of generators for such a decomposition.

We first build the torsion subgroup $A_f$ of $A$. We start by defining $A_{f,0}$ to be the empty set. At stage $s > 0$, suppose we have defined a finite set $A_{f,s-1}$ of torsion elements of $A$. If $\exists a\in A - A_{f,s-1}\,\exists n \in \omega [n \cdot a= id]$ is true, we enumerate $A - A_{f,s-1}$ until a torsion element $a$ is found and let $A_{f,s}=A_{f,s-1}\cup\{a\}$. Otherwise, we stop, and by construction, we must have $A_f=A_{f,s-1}$. This algorithm terminates after finitely many steps since $A_f\cong\mathbb{Z}_{q_1}\oplus\mathbb{Z}_{q_2}\oplus...\oplus\mathbb{Z}_{q_k}$ must be finite.

It is computable to find an invariant factor decomposition of $A_f$. One obvious method is to enumerate all subsets of $A_f$ and find a minimal generating set that satisfies the invariant factor requirements. Let $A_f\cong\bigoplus_{1\le i\le k}\langle a^f_i\rangle$ be the invariant factor decomposition of $A_f$.

Let us now show that it is $\mathbf{0'}$-computable to find the generators for the infinite components of $A$. We would like to construct a $\Sigma^0_1$ statement, and the idea is similar to step 2, case 2, in the proof of Lemma \ref{find u and v}. Instead of testing a single element, we test all subsets $\{a_1, ..., a_r\}$ of $A-A_f$ with size $r$. 


\begin{claim}
For $a_1, a_2, ..., a_r\in A - A_f$, $\{a^f_1, ...a^f_k, a_1, ..., a_r\}$ is not a generating set of $A$ if and only if there exist elements $\tilde{a}_1, ..., \tilde{a}_r\in A-A_f$ and matrices $M\in\mathcal{M}_{r\times k}(\mathbb{Z})$ and $N\in\mathcal{M}_{r\times r}(\mathbb{Z})$ such that $\vec{a}=M\vec{a^f}+N\vec{\tilde{a}}$ and $det(N)\neq\pm1$, where $\vec{a^f}=(a^f_1,... , a^f_k)$, $\vec{a}=(a_1, ..., a_r)$, and $\vec{\tilde{a}}=(\tilde{a}_1, ..., \tilde{a}_r)$.
\end{claim}

\begin{proof}
We first prove the forward direction. Suppose $\{a^f_1, ...a^f_k, a_1, ..., a_r\}$ is not a generating set of $A$. Let $\{a^f_1, ..., a^f_k, \tilde{a}_1, ..., \tilde{a}_r\}$ be a generating set of $A$. Define $\vec{a^f}=(a^f_1,... , a^f_k)$, $\vec{a}=(a_1, ..., a_r)$, and $\vec{\tilde{a}}=(\tilde{a}_1, ..., \tilde{a}_r)$. Then, there are two integer matrices $M\in\mathcal{M}_{r\times k}(\mathbb{Z})$ and $N\in\mathcal{M}_{r\times r}(\mathbb{Z})$ such that
\begin{equation}\label{matrix eq}
\left[
\begin{array}{c}
\vec{a^f}\\ \hline\vec{a}
\end{array}
\right]=\left[
\begin{array}{c|c}
\mathbb{1}_{k\times k} & \mathbb{0}_{k\times r}\\ \hline M & N
\end{array}
\right]\left[
\begin{array}{c}\vec{a^f}\\ \hline \vec{\tilde{a}}
\end{array}
\right]
\end{equation}

Using the fact that an integer matrix has an integer inverse if and only if its determinant is $\pm1$, we have that
\begin{align*}
&\{a^f_1, ..., a^f_k, a_1,... , a_r\}\text{ is not a generating set of }A\\
\iff&\left[\begin{array}{c|c}
\mathbb{1}_{k\times k} & \mathbb{0}_{k\times r}\\
\hline
M & N
\end{array}\right]\text{ is not invertible in } \mathcal{M}_{(k+r)\times (k+r)}(\mathbb{Z})\\
\iff&N\text{ is not invertible in }\mathcal{M}_{r\times r}(\mathbb{Z})\\
\iff&\det(N)\neq\pm1
\end{align*}

For the converse direction, we prove its contrapositive. Suppose $\{a^f_1, ...a^f_k, a_1, ..., a_r\}$ is a generating set of $A$. For any elements $\tilde{a}_1, ..., \tilde{a}_r\in A-a_f$ and matrices $M\in\mathcal{M}_{r\times k}(\mathbb{Z})$ and $N\in\mathcal{M}_{r\times r}(\mathbb{Z})$ such that $\vec{a}=M\vec{a^f}+N\vec{\tilde{a}}$, where $\vec{a^f}=(a^f_1,... , a^f_k)$, $\vec{a}=(a_1, ..., a_r)$, and $\vec{\tilde{a}}=(\tilde{a}_1, ..., \tilde{a}_r)$, we want to show that $\det(N)=\pm1$. Note that there exist matrices $M'\in\mathcal{M}_{r\times k}(\mathbb{Z})$ and $N'\in\mathcal{M}_{r\times r}(\mathbb{Z})$ such that
\begin{align*}
&\left[
\begin{array}{c}
\vec{a^f}\\\hline\vec{\tilde{a}}
\end{array}
\right]=\left[
\begin{array}{c|c}
\mathbb{1}_{k\times k} & \mathbb{0}_{k\times r}\\ \hline M' & N'
\end{array}
\right]\left[
\begin{array}{c}\vec{a^f}\\ \hline \vec{a}
\end{array}
\right]\\
\implies &\left[
\begin{array}{c}
\vec{a^f}\\ \hline\vec{a}
\end{array}
\right]=\left[
\begin{array}{c|c}
\mathbb{1}_{k\times k} & \mathbb{0}_{k\times r}\\ \hline M & N
\end{array}
\right]\left[
\begin{array}{c|c}
\mathbb{1}_{k\times k} & \mathbb{0}_{k\times r}\\ \hline M' & N'
\end{array}
\right]\left[
\begin{array}{c}\vec{a^f}\\ \hline \vec{a}
\end{array}
\right]=\left[
\begin{array}{c|c}
\mathbb{1}_{k\times k} & \mathbb{0}_{k\times r}\\ \hline M+NM' & NN'
\end{array}
\right]\left[
\begin{array}{c}\vec{a^f}\\ \hline \vec{a}
\end{array}
\right]
\end{align*}

Since $\{a^f_1, ..., a^f_k, a_1, ..., a_r\}$ is a minimal generating set of $A$, we have $NN'=\mathbb{1}$. So $N$ is invertible in $\mathcal{M}_{r\times r}(\mathbb{Z})$ and hence $\det(N)=\pm1$.
\end{proof}

Using the claim, we have the following $\Sigma^0_1$ statement that tests if a subset of $A-A_f$ with size $r$ is the desired set of generators.
\begin{align*}
&\text{For }a_1, a_2, ..., a_r\in A - A_f, \{a^f_1, ...a^f_k, a_1, ..., a_r\}\text{ is not a generating set of }A\\
\iff &\forall 1\le i\le r \, \exists\tilde{a}^f_i\in A_f \, \forall 1\le j\le r \, \exists\tilde{a}_j\in A - A_f \, \exists n_{ij}\\
&\left[\forall 1\le l\le r\left[a_l=\tilde{a}^f_l+\sum_{1\le j\le r} n_{lj} \cdot \tilde{a}_j\right]\land \, \det((n_{st})_{1\le s\le r, 1\le t\le r})\neq\pm 1\right]
\end{align*}

\end{proof}

Finally, we combine Theorem \ref{cyclic theorem} and Lemma \ref{cyclic decomposition} to prove part 1 of Theorem \ref{main theorem}

\begin{theorem}\label{main theorem part 1}
There is a $\mathbf{0}'$-computable function $F(e,a,b,g,h,r)$ such that if $\mathcal{G}_e$ is a computable abelian group $E$, and $C_a$, $C_b$, $C_g$, $C_h$ are computable sets that when inheriting the group operation from $E$ are themselves groups $A$, $B$, $G$, and $H$, satisfying $E = A \oplus G = B \oplus H$, and the groups $A$ and $B$ are isomorphic and finitely generated with rank $r$, then $\varphi_{F(e,a,b,g,h,r,i)}$ is an isomorphism from $G$ to $H$.
\end{theorem}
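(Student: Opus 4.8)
The plan is to reduce the finitely generated case to the cyclic case already handled by Theorem~\ref{cyclic theorem}, by cancelling the cyclic factors of $A$ and $B$ one at a time and composing the resulting isomorphisms. First I would apply Lemma~\ref{cyclic decomposition} with the $\mathbf{0}'$ oracle (this is the only step where the oracle and the rank $r$ are genuinely needed) to compute generators realizing invariant factor decompositions $A=C_1\oplus\cdots\oplus C_m$ and $B=C'_1\oplus\cdots\oplus C'_m$. Since $A\cong B$, these lists have the same length $m$, and because Lemma~\ref{cyclic decomposition} returns the factors in the canonical order by divisibility, they may be matched so that $C_j\cong C'_j$ is cyclic for every $j$.

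The heart of the argument is a single cancellation step, repeated $m$ times. Suppose at some stage I have a computable group $\tilde E\subseteq E$ with $\tilde E = P\oplus G^* = Q\oplus H$, where $P\cong Q$ is finitely generated, $Q$ is one of the truncated lists $C'_i\oplus\cdots\oplus C'_m$, and I am given generators of the cyclic factors of $P$. Writing $P=\langle p\rangle\oplus P'$ with $\langle p\rangle$ cyclic and $P'$ the remaining factors, and $Q=\langle q\rangle\oplus Q'$ with $\langle p\rangle\cong\langle q\rangle$, I regroup $\tilde E=\langle p\rangle\oplus(P'\oplus G^*)=\langle q\rangle\oplus(Q'\oplus H)$ and apply Theorem~\ref{cyclic theorem} to the cyclic summands $\langle p\rangle,\langle q\rangle$. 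This produces a computable isomorphism $\phi\colon P'\oplus G^*\to Q'\oplus H$. I then pass to the new ambient group $Q'\oplus H$ and record the decomposition $Q'\oplus H=\phi(P')\oplus\phi(G^*)=Q'\oplus H$; here $\phi(P')\cong P'\cong Q'$ has one fewer cyclic factor, and the generators of $\phi(P')$ are simply the $\phi$-images of the known generators of $P'$, hence computable. Restricting $\phi$ to $G^*$ gives a computable isomorphism $G^*\to\phi(G^*)$, which I carry forward, while $H$ is left untouched. After $m$ steps the finitely generated summand is exhausted, the running group equals $H$, and the composite of the restricted isomorphisms is a computable isomorphism $G\to H$.

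The key point making each step effective is that every intermediate group is computable with a uniformly obtainable index. Since at every stage the ambient group is an \emph{internal} direct sum of cyclic subgroups with known generators together with a computable complement, the coordinate projections are computable: to find the coordinates of $x$ one searches over finitely many torsion coefficients and integer free coefficients until $x$ is expressed in terms of the generators, the direct sum guaranteeing a unique answer. Thus membership in each summand is decidable, including membership in $\phi(P')$ via the computable inverse of $\phi$ (itself computable by search on the computable bijection $\phi$). Indices for $\langle p\rangle$, $\langle q\rangle$, $P'\oplus G^*$, $Q'\oplus H$, and their further decompositions can therefore be produced and fed into Theorem~\ref{cyclic theorem}. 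Consequently only the very first decomposition consumes the $\mathbf{0}'$ oracle, and the entire function $F$ is $\mathbf{0}'$-computable.

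I expect the main obstacle to be the bookkeeping of the inductive step rather than any single hard computation: the remaining factors of $A$ need not map to the remaining factors of $B$ under the cyclic isomorphism $\phi$, so cancellation cannot be performed ``in place.'' The resolution is to transport the leftover decomposition through $\phi$ into the new ambient group $Q'\oplus H$, using $\phi(P')$ as the new finitely generated summand, and to defer assembly of the final map to a composition of the restricted isomorphisms at the end. Verifying that this transport preserves the matching of cyclic factors, so that Theorem~\ref{cyclic theorem} continues to apply, and that all resulting indices are obtainable uniformly, is the part requiring the most care.
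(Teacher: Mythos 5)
Your proposal is correct and follows essentially the same route as the paper: apply Lemma~\ref{cyclic decomposition} to get matched invariant factor decompositions, then repeatedly cancel one cyclic summand at a time via Theorem~\ref{cyclic theorem}, transporting the leftover decomposition through each resulting isomorphism and composing the maps at the end. One small quibble: it is not only the initial decomposition that consumes the oracle, since each invocation of Theorem~\ref{cyclic theorem} is itself only $\mathbf{0}'$-computable, but as there are finitely many invocations this does not affect the conclusion.
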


\begin{proof}
Since $A\cong B$, by uniqueness (up to isomorphism) of invariant factor decomposition, we have that $A\cong\mathbb{Z}_{q_1}\oplus\mathbb{Z}_{q_2}\oplus...\oplus\mathbb{Z}_{q_k}\oplus\mathbb{Z}^r\cong B$ where $q_i|q_{i+1}$ for $i=1, 2, ... k-1$ and $r$ is the rank, which is given as part of the inputs of $F$. By Lemma \ref{cyclic decomposition}, it is $\mathbf{0'}$-computable to find the generators $a^f_1, a^f_2, ..., a^f_k, a_1, a_2, ..., a_r$ of group $A$ as defined in the lemma, as well as the corresponding generators $b^f_1, b^f_2, ..., b^f_k, b_1, b_2, ..., b_r$ of group $B$. Let us define $A_i=\langle a^f_i\rangle$, for $1\le i\le k$ and $A_i=\langle a_i\rangle$ for $k+1\le i\le k+r$. Define $B_i$ for $1\le i\le k+r$ analogously. Then, for $1\le i\le k+r$, $A_i\cong B_i$ and both are cyclic. We have
\begin{equation}\label{original E}
E=A_1\oplus A_2\oplus...\oplus A_{k+r}\oplus G=B_1\oplus B_2\oplus...\oplus B_{k+r}\oplus H
\end{equation}

Define $G'=A_2\oplus...\oplus A_{k+r}\oplus G$ and $H'=B_2\oplus...\oplus B_{k+r}\oplus H$. Note that $G'$ and $H'$ are computable subgroups of $E$. Since $A_1, B_1$ are isomorphic and cyclic, applying Theorem \ref{cyclic theorem} to $E, A_1, B_1, G', H'$, we can find a computable isomorphism $f_1$ between $G'$ and $H'$ using a $\mathbf{0'}$ oracle. Then,
\[H'=f_1(A_2)\oplus...\oplus f_1(A_{k+r})\oplus f_1(G)=B_2\oplus...\oplus B_{k+r}\oplus H\]

Notice that this is of the same form as equation (\ref{original E}) with one less component, which allows us to apply Theorem \ref{cyclic theorem} again. After repeating this process $(k+r)$ times, with $f_i$ being the isomorphism obtained at the $i$th step, we eventually arrive at a computable isomorphism
 $(f_{k+r}\circ f_{k+r-1}...\circ f_1)$ between $G$ and $H$. 
\end{proof}


\section{Lower Bound}
In this section, we prove (2) of Theorem \ref{main theorem}, which is restated below. It gives, roughly speaking, a lower bound on the complexity of uniformity. In addition, we will prove a stronger result which requires the generators of $A$ and $B$ to be part of the input of $F$, in addition to their rank.

\begin{theorem}\label{part(2) with rank}
Suppose $F$ is a function such that whenever $\mathcal{G}_e$ is a computable abelian group $E$, and $C_a$, $C_b$, $C_g$, $C_h$ are computable sets that when inheriting the group operation from
 $E$ are themselves groups $A, B, G,$ and $H$, satisfying $E = A \oplus G = B \oplus H$, and the groups $A$ and $B$ are isomorphic and finitely generated with rank $r$, we have that $\varphi_{F(e,a,b,g,h,r)}$ is an isomorphism from $G$ to $H$. Then $\emptyset' \leq_T F$. 
\end{theorem}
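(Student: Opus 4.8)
The plan is to prove a lower bound by reducing the halting problem to $F$. I need to show that any function $F$ that uniformly outputs isomorphism indices can compute $\emptyset'$. The standard technique for such lower bounds is a coding construction: I will build, uniformly in $e$, a family of computable groups $E_e$ together with computable decompositions $E_e = A \oplus G = B \oplus H$, arranged so that the behavior of the isomorphism $\varphi_{F(\ldots)}$ on a fixed element reveals whether $e \in \emptyset'$, i.e.\ whether $\varphi_e(e)\!\downarrow$. The key design principle is that the \emph{isomorphism type} of $G$ and $H$ must be independent of whether $\varphi_e(e)$ halts (so that a valid input to $F$ is always produced), yet the \emph{specific generator} that a correct isomorphism must use should depend on the halting behavior, so that reading off $F$'s output decides the question.

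Concretely, I would take $A, B$ to be copies of $\mathbb{Z}$ (rank $1$, infinite cyclic), since Deveau's non-uniformity obstruction came precisely from the need to hardcode generators of infinite cyclic subgroups. I would arrange $G \cong H \cong \mathbb{Z}$ as well, but build two candidate generators $u_0, u_1 \in G$ (and correspondingly $v_0, v_1 \in H$) such that during the computable construction of $E_e$, one of these is ``activated'' as the true generator of the $U$-summand depending on whether and when $\varphi_e(e)$ halts. The trick is to delay: while $\varphi_e(e)$ has not halted, the construction keeps the decomposition in a default state pointing to $u_0$; if at stage $s$ the computation halts, the construction switches so that the correct generator becomes $u_1$ (or encodes $s$ in some computably-recoverable way). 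Since $G\cong H\cong\mathbb Z$ regardless, $F$ must output \emph{some} isomorphism index, and by evaluating $\varphi_{F(e,\ldots)}$ on a suitable element I can read off which generator was used, hence decide $e \in \emptyset'$. Because the groups, their subgroups, and the ambient $E_e$ are all uniformly computable in $e$, and the relevant indices $a,b,g,h,r$ are obtained uniformly, the map $e \mapsto F(e,a,b,g,h,r)$ is computable in $F$; combined with the decoding step this gives $\emptyset' \leq_T F$.

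The main obstacle I anticipate is the simultaneous-requirement tension: I must keep $G$ and $H$ genuinely isomorphic (so the hypotheses of $F$ are met and $F$ is obligated to give a real isomorphism) while making the \emph{computable presentation} ambiguous enough that no uniform rule can pin down the generator without knowing the halting fact. This is exactly where a naive construction fails, because if the generator is fixed too early the answer leaks and $F$ could avoid committing, and if the structure is corrupted the isomorphism ceases to exist and $F$ is off the hook. The construction must therefore ensure that $\varphi_{F(e,\ldots)}$ applied to the fixed element $u_0$ (say) lands in $H$ at a location that differs measurably in the halting versus non-halting cases — for instance, forcing a valid isomorphism to send $u_0$ to $v_0$ when $\varphi_e(e)\!\uparrow$ but to something incompatible with $v_0$ when $\varphi_e(e)\!\downarrow$ — so that a single finite computation with oracle $F$ resolves the membership question.

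I would then verify the three standard bookkeeping points: that each $E_e$ is uniformly computable with the claimed computable subgroups $A,B,G,H$ and rank $r$; that $E_e = A\oplus G = B\oplus H$ holds as internal direct sums in both the halting and non-halting cases; and that the decoding procedure terminates (it only needs to evaluate $\varphi_{F(e,\ldots)}$ on finitely many inputs and compare against the computable sets $G, H$). Since the stronger statement of Theorem \ref{part(2) with rank} allows the generators of $A,B$ to be supplied to $F$, I should confirm that providing these generators does not help $F$ locate the generator of the $U$-summand of $G$ — which is the content Deveau isolated — so the reduction survives even with that extra input.
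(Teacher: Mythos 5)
Your overall strategy is the paper's: for each $e$ you build $E_e = A\oplus G = B\oplus H$ with $A$, $B$, $G$, $H$ all infinite cyclic (rank $1$), so that no isomorphism type depends on whether $\varphi_e(e)$ halts, and you decide $e\in\emptyset'$ by evaluating $\varphi_{F(e,a,b,g,h,r)}$ at one fixed element of $G$ and checking where it lands. However, the one step that carries the proof --- how to realize the ``generator switch'' as a monotone enumeration of atomic diagrams --- is precisely the step you leave unspecified, and your description of it (``two candidate generators $u_0, u_1 \in G$ \dots one of these is activated'') would fail if implemented literally. If both candidates are present in the cyclic group from the outset, the group structure already determines which elements generate it, so there is nothing left to switch; and since atomic facts once enumerated cannot be retracted, ``activation'' cannot mean reassigning a designation at a later stage. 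You correctly identify this tension as the main obstacle, but identifying it is not the same as resolving it.

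The resolution (the paper's) is to make the change by only ever \emph{adding} elements. Work inside $\mathbb{Q}^2$, keep $G = B = \langle (0,1)\rangle$ fixed throughout, and begin building $A = H = \langle (1,0)\rangle$. If $e$ never enters $K$, then in the limit $E = \mathbb{Z}^2$ and any isomorphism $f\colon G\to H$ must satisfy $f((0,1)) = \pm(1,0)$. If $e$ enters $K$ at some stage, adjoin the new element $\left(\tfrac{1}{2},0\right)$ to $A = H$: this is consistent with every fact already enumerated (the old generator is now twice the new one), $H$ remains infinite cyclic, and $E = \tfrac{1}{2}\mathbb{Z}\times\mathbb{Z}$ still decomposes as required, but now any isomorphism must send $(0,1)$ to $\pm\left(\tfrac{1}{2},0\right)$. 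Computing $f_e((0,1))$ and comparing against $\pm(1,0)$ then decides $e\in K$. You should make this densification (or an equivalent monotone device) explicit; with it in place, the rest of your bookkeeping --- uniformity of the group indices in $e$, computability of the subgroup sets in both outcomes, and termination of the decoding step --- goes through as you describe.
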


\begin{proof}
We prove the theorem by assuming such an $F$ exists and showing it can compute the halting set $K$. Computably, we will construct for every $e\in\omega$ specific abelian groups $E_e=A_e\oplus G_e = B_e\oplus H_e$, where $A$ and $B$ are isomorphic and finitely generated.
We will construct $A_e$ and $B_e$ with rank 1 so that $F$ can use this information as input. In fact, $A_e$ and $B_e$ are infinite cyclic groups in the following proof.

We construct the atomic diagrams of the groups stagewise. The indices of the groups are then given by the Recursion Theorem \cite{Soare}. Let $K_s$ denote the set of the first $s$ elements enumerated. Two different instances are constructed depending on whether $e$ is in the halting set. We omit the subscripts $e$ for readability and just write, e.g., $A$ instead of $A_e$, keeping in mind that the construction of $A$ depends on $e$. 

At stage $s$ of the construction, we put atomic facts into the diagrams of $A_e, B_e, G_e, H_e$ and $E_e$ for $e \leq s$, and proceed as if we were enumerating the atomic diagrams of subgroups of $\Q^2$. When we begin enumerating the diagram of these groups for some $e$, we add in a trivial fact to ensure that the element $a = h = (1,0)$ is in both $A$ and $H$, and the element $b = g = (0,1)$ is in both $B$ and $G$. At every stage, we add in the necessary facts to the atomic diagram of $E$ to ensure that $E=A\oplus G=H\oplus B$, where $A = H$ and $G = B$.\footnote{Note that in this instance, we will ensure that $E$ is the internal direct product of $A$ and $G$, as well as the internal direct product of $B$ and $H$. Note that since $E$ is abelian and we are dealing with internal direct products, $B \oplus H = H \oplus B$.} 

At stage $s$, if $e \not \in K_s$, we continue to build $A,H$ as if $A = H = \langle (1,0) \rangle \cong \mathbb{Z}$ and $B,G$ as if $B = G = \langle (0,1) \rangle \cong \mathbb{Z}$. More precisely, we introduce elements $(n,0)$ and $(0,n)$ for $n \leq s$ and add facts to the relevant atomic diagrams that ensure they are the intended integer multiples of $(1,0)$ and $(0,1)$. 
If $e \not \in K$, then in the limit, we have $E=A\oplus G=B\oplus H=\mathbb{Z}^2$, and any isomorphism $f:G\to H$ must satisfy $f((0,1))=\pm(1,0)$.



On the other hand, if $s$ is the first such that $e \in K_s$, then we introduce the elements $\hat{a}=\hat{h}=\left(\frac{1}{2}, 0\right)$ in ${A}$ and ${H}$, and add facts to the atomic diagrams to ensure they have the properties intended. We continue building $A,H$ as if $A = H = \left\langle \left(\frac{1}{2}, 0\right) \right\rangle \cong \mathbb{Z}$. We continue to build $B,G$ just as in the previous case. In the limit, for $e \in K$ we will have $E=A\oplus G=B\oplus H=\frac{1}{2}\mathbb{Z}\times\mathbb{Z}$, where any isomorphism $f:G\to H$ must satisfy $f((0,1))=\pm\left(\frac{1}{2},0\right)$.

For $e\in\omega$, let $f_e$ be the isomorphism provided by $F$ when applied to the groups constructed as above. Then,
\[f_e((0,1))=
\begin{cases}
\pm(1,0)\quad&\text{if }e \notin K\\
\pm\left(\frac{1}{2},0\right)\quad&\text{if }e\in K
\end{cases}\]

\end{proof}



The construction in the above proof is relatively straightforward, as we simply changed the generators of groups $A$ and $H$, as needed, to code in the information of $K$. Let us now consider a more restrictive case where we require both the rank and the generators of $A$ and $B$ to be fixed in advance. A more complicated construction leads to the following stronger theorem. 


\begin{theorem}\label{part(2) with generators}
Suppose that $F$ is a function such that whenever $\mathcal{G}_e$ is a computable abelian group $E$, and $C_a$, $C_b$, $C_g$, $C_h$ are computable sets that when inheriting the group operation from $E$ are themselves groups $A, B, G,$ and $H$, satisfying $E = A \oplus G = B \oplus H$, and the groups $A$ and $B$ are isomorphic and finitely generated with rank $r$, and with generators $a_1, \dots a_n, b_1, \dots, b_n$, we have that $\varphi_{F(e,a,b,g,h,r, a_1, \dots, a_n, b_1, \dots, b_n)}$ is an isomorphism from $G$ to $H$. Then  $\emptyset' \leq_T F$. 
\end{theorem}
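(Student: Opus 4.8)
The plan is to strengthen the reduction behind Theorem \ref{part(2) with rank}, where for each $e$ we built $E_e=A\oplus G=B\oplus H$ and read $e\in K$ off the isomorphism $\varphi_{F(\dots)}$ supplied by $F$. There the halting information was coded \emph{into the generator} of $A$ (which coincided with $H$), whose value was one element when $e\notin K$ and a proper divisor of it when $e\in K$; since $F$ received only the rank, it had to recover this and its output betrayed the answer. That device is unavailable now: the generators $a_1,\dots,a_n,b_1,\dots,b_n$ must be committed uniformly in $e$, \emph{before} $K$ is known, so the element generating $A$ (or $B$) may not depend on whether $e\in K$. The task is therefore to keep $A\cong B\cong\mathbb Z$ with prescribed, halting-independent generators while encoding $K$ into the finer way $G$ and $H$ sit inside $E$.

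Concretely, I would construct, uniformly in $e$ and via the Recursion Theorem \cite{Soare}, a computable group $E_e$ together with computable sets $A,B,G,H$ realizing $E_e=A\oplus G=B\oplus H$ with $A,B$ infinite cyclic on the fixed generators. The atomic diagram is enumerated stagewise: while $e\notin K_s$ we build one ``shape'' of the groups, and if $e$ enters $K$ at stage $s$ we weave in \emph{fresh} elements to complete a second shape. Because no committed atomic fact is ever retracted, $E_e$ is genuinely computable and \emph{no fixed element's order or multiplication table may depend on $K$}; the dependence on $K$ must live only in existential features---such as whether a designated $x_0\in G$ acquires a root---realized by witnesses introduced exactly when $e\in K$. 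The reduction then fixes a landmark $x_0\in G$, computes $z=\varphi_{F(\dots)}(x_0)$, and runs a purely computable test on $z$; the construction should guarantee that this test separates the two shapes, yielding $\emptyset'\le_T F$.

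The crux---and the main obstacle---is to force the value $z=f(x_0)$ into two computably separable sets in the two cases while every hypothesis of Walker's theorem still holds. One cannot simply reuse the ``halve the generator'' trick: if the two shapes are nested, say $E^{0}\subseteq E^{1}$ with $A,B$ common direct summands, then $[G^{1}:G^{0}]=[E^{1}:E^{0}]=[H^{1}:H^{0}]$, so $G$ and $H$ rescale by the same factor and the set of possible images of any committed $x_0\in G$ is \emph{identical} in both shapes; no computable test on $f(x_0)$ can then distinguish them. This is exactly why the rank case had to let $A$ itself grow, which fixed generators now forbid. The resolution I would pursue is to make the two shapes \emph{non-nested}, sharing only a finitely generated prefix and then diverging, so as to defeat this index-coupling. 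Since a rank-one complement would make the committed generators already generate $E$ (re-imposing nesting), one is driven to give $G$ and $H$ rank at least two---which is permissible precisely because $G,H$ need not be finitely generated, only $A,B$ must---so that $x_0$ can no longer be pinned by cyclicity. Pinning its image to disjoint computable targets instead (for instance through a characteristic subgroup of $G$ that any isomorphism must respect, or through the divisibility type that $f$ necessarily preserves), while keeping $A,B$ honest finitely generated summands on the prescribed generators and keeping $E_e$ computable, is the delicate heart of the argument and the reason the construction is substantially more intricate than in Theorem \ref{part(2) with rank}.
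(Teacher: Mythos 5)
Your proposal correctly diagnoses the new difficulty (the generators of $A$ and $B$ must be committed before $K$ is known, so the coding must move into $G$ and $H$), and the strategy you gesture at --- keep $A\cong B\cong\mathbb Z$ on fixed generators, give $G$ and $H$ rank two, make the halting and non-halting ``shapes'' diverge after a common prefix, and read off $e\in K$ from the divisibility type of the image of a landmark element --- is in fact the strategy the paper executes. But what you have written is a plan, not a proof: you explicitly defer ``the delicate heart of the argument,'' and that deferred part is essentially all of the mathematical content of the theorem. Nothing in your sketch exhibits the groups, verifies the direct-sum decompositions, or produces the computable test.

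Concretely, three things are missing. First, the construction itself: the paper works inside $\mathbb Q^3$ with $a=(1,0,0)$, $b=(2,1,2)$, $g_1=(0,1,0)$, $g_2=h_2=(0,0,1)$, $h_1=(5,2,5)$; while $e\notin K_s$ it makes the $g_2$-direction divisible by every integer coprime to $5$, and if $e$ enters $K$ it adjoins $\hat g_1=(0,\tfrac15,\tfrac1{5n})$, $\hat h_1=(1,\tfrac25,\tfrac{5n+2}{5n})$, etc., so that $5$-divisibility appears exactly in the halting case. Second, the verification that $E=A\oplus G=B\oplus H$ holds \emph{in both outcomes} with the prescribed generators: this is not automatic and in the paper requires exhibiting explicit unimodular integer matrices converting $(a,\hat g_1,\hat g_2)$ to $(b,\hat h_1,\hat h_2)$ and back; choosing $b$, $h_1$ so that such matrices exist is precisely where the arithmetic is delicate, and your sketch gives no reason to believe suitable choices exist. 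Third, the separation argument: one must prove that \emph{any} isomorphism $f:G\to H$ satisfies a computably checkable dichotomy --- in the paper, preservation of infinite divisibility forces $f(g_2)$ onto the third axis, and then the determinant condition on the rank-two case forces the denominator of the $\hat h_2$-component of $f(g_1)$ to be divisible by $5$ iff $e\in K$. Your appeal to ``a characteristic subgroup'' or ``the divisibility type'' names the right invariants but does not establish that they yield disjoint computable target sets. Until these steps are carried out, the reduction $\emptyset'\le_T F$ is not proved.
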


\begin{proof}
We use similar notation as in the proof of Theorem \ref{part(2) with rank} and follow the same setup. That is, we consider an enumeration $K_s$ of the halting set and, for every $e\in\omega$, construct abelian groups $E=A\oplus G=B\oplus H$ stagewise in a way that depends on whether  $e \in K_s$. This time, however, we act as if we are enumerating subgroups of $\mathbb{Q}^3$.

When we begin to enumerate the atomic diagrams of $A,B$, we ensure that the elements $a = (1,0,0)$ and $b = (2,1,2)$ are elements of $A,B$ respectively and continue to enumerate the atomic diagram as if $A=\langle a\rangle$ and $B=\langle b\rangle$ unless directed otherwise. When we begin to enumerate the diagrams of $G,H$ we ensure the elements $g_1=(0,1,0)$ and $g_2=(0,0,1)$ are in $G$, and $h_1=(5,2,5)$ and $h_2=(0,0,1)$ are in $H$. We continue to enumerate these atomic diagrams as if $G,H$ are generated only by explicitly introduced elements.



If $e\notin K_s$ at stage $s$, we add in the following element to both $G$ and $H$:
\[N_s\coloneqq\left(0,0,\frac{1}{\prod_{i=1}^s n_i}\right)\]
Here, the sequence $(n_i)_{i\ge1}$ lists $\mathbb{N}-5\mathbb{N}$ in increasing order. 

At the end of any finite stage $s$, it would appear that $G$ and
 $H$ are finitely generated by $\{g_1,N_s\}$ and $\{h_1,N_s\}$, respectively. Note that $a=h_1-2b-h_2$ and $g_1=5b-2h_1$. So, if $e\notin K$, we have
\[E=A\oplus G=B\oplus H=\mathbb{Z}\times\mathbb{Z}\times\left\{\frac{p}{q}:p,q\in\mathbb{Z}, \gcd(p,q)=1\text{ and }5\nmid q\right\}.\]

If $e$ enters the halting set at some stage $s$, then we first find the largest $N$ such that $(0,0,1/N)$ has appeared in some previous stage, i.e. $N=\prod_{i=1}^{s-1}n_i$. Let $n=5N$. We then ensure the following elements appear in $G$ and $H$ respectively:
\begin{align*}
\hat{g}_1&=\left(0,\frac{1}{5},\frac{1}{5n}\right)\quad\quad\quad \hat{g}_2=\left(0,0,\frac{1}{n}\right)\quad\\
\hat{h}_1&=\left(1,\frac{2}{5},\frac{5n+2}{5n}\right)\quad \hat{h}_2=\left(0,0,\frac{1}{n}\right)\quad
\end{align*}

 or $\hat{H}$ except for those generated by existing elements. 

Notice that
\begin{align*}
&\hat{g}_1=\frac{g_1+\hat{g}_2}{5}\quad\quad\quad\hat{h}_1=\frac{h_1+2\hat{h}_2}{5}\quad\quad\quad N_{s-1}=5\hat{g_2}=5\hat{h_2}\\
\implies&g_1,N_{s-1}\in\langle\{\hat{g}_1,\hat{g}_2\}\rangle\text{ and }h_1,N_{s-1}\in\langle\{\hat{h}_1,\hat{h}_2\}\rangle
\end{align*}

Therefore, when $e \in K$, in the limit we will have $G=\langle\hat{g}_1, \hat{g}_2\rangle$ and ${H}=\langle \hat{h}_1,\hat{h}_2\rangle$. We still have ${E}={A}\oplus{G}={B}\oplus{H}$ since
\[\begin{bmatrix}a\\\hat{g}_1\\\hat{g}_2\end{bmatrix}
=\begin{bmatrix}-2&5&-n-2\\1&-2&1\\0&0&1\end{bmatrix}
\begin{bmatrix}b\\\hat{h}_1\\\hat{h}_2\end{bmatrix}\quad\quad\text{ and }\quad\quad\begin{bmatrix}b\\\hat{h}_1\\\hat{h}_2\end{bmatrix}
=\begin{bmatrix}2&5&2n-1\\1&2&n\\0&0&1\end{bmatrix}
\begin{bmatrix}a\\\hat{g}_1\\\hat{g}_2\end{bmatrix}\]

We now show that $\emptyset' \leq_T F$. Using $F$, we will obtain two isomorphisms $f:G_{e'}\to H_{e'}$ and $\hat{f}:{G}_{e''} \to {H_{e''}}$, where $e' \not \in K$ and $e'' \in K$. 

Let us observe first how such $f$ and $\hat{f}$ act on the inputs $g_1$ and $g_2$. There exist $t_{11},t_{12},t_{21},t_{22},$ \newline $\hat{t}_{11},\hat{t}_{12},\hat{t}_{21},\hat{t}_{22}\in\mathbb{Z}$ such that
\begin{align}
&f(g_1)=t_{11}h_1+t_{12}\left(0,0,\frac{1}{q_1}\right)\quad&&\text{where }q_1\in\mathbb{Z}\text{ and }5\nmid q_1\nonumber\\
&f(g_2)=t_{21}h_1+t_{22}\left(0,0,\frac{1}{q_2}\right)=\left(0,0,\frac{t_{22}}{q_2}\right)\quad&&\text{where }q_2\in\mathbb{Z}\text{ and }5\nmid q_2\nonumber\\
&\begin{bmatrix}\hat{f}(\hat{g}_1)\\ \hat{f}(\hat{g}_2)\end{bmatrix}=
\begin{bmatrix}
\hat{t}_{11}&\hat{t}_{12}\\
\hat{t}_{21}&\hat{t}_{22}
\end{bmatrix}\begin{bmatrix}
\hat{h}_1\\\hat{h}_2
\end{bmatrix}\quad&&\text{where }\begin{vmatrix}
\hat{t}_{11}&\hat{t}_{12}\\
\hat{t}_{21}&\hat{t}_{22}
\end{vmatrix}=\hat{t}_{11}\hat{t}_{22}-\hat{t}_{12}\hat{t}_{21}=\pm1\label{det constraint}
\end{align}

For any $m\in\mathbb{Z}$ such that $5\nmid m$, we have $\frac{g_2}{m}\in G_{e'}$, so $\frac{f(g_2)}{m}\in H_{e'}$. Hence, $t_{21}=0$. The constraint on the determinant follows from the fact that $H_{e''}=\langle\{\hat{f}(\hat{g}_1),\hat{f}(\hat{g}_2)\}\rangle$, which implies that the coefficient matrix is invertible and its inverse only contains integer entries. Note that
\begin{align*}
\hat{f}(g_1)&=\hat{f}(5\hat{g}_1-\hat{g}_2)=5\hat{f}(\hat{g}_1)-\hat{f}(\hat{g}_2)=(5\hat{t}_{11}-\hat{t}_{21})\hat{h}_1+(5\hat{t}_{12}-\hat{t}_{22})\hat{h}_2\\
&=(5\hat{t}_{11}-\hat{t}_{21})\frac{h_1+2\hat{h}_2}{5}+(5\hat{t}_{12}-\hat{t}_{22})\hat{h}_2\\
&=\left(\hat{t}_{11}-\frac{\hat{t}_{21}}{5}\right)h_1+\left(2\hat{t}_{11}+5\hat{t}_{12}-\frac{2\hat{t}_{21}}{5}-\hat{t}_{22}\right)\hat{h}_2
\end{align*}

For any $e\in\omega$, we apply $F$ to the groups built as above. Let $f_e$ denote the isomorphism given by $F$. Then, we can decide if $e\in K$ as follows.

If $f_e(g_2)=(x,y,z)$ where $x,y\neq0$, then $f_e$ cannot be an isomorphism between $G_{e'}$ and $H_{e'}$, since $t_{21}=0$ is always true. So, $e\in K$. Otherwise, $f_e(g_2)=(0,0,z)$, then we compute $f_e(g_1)=(x',y',z')$. Note that
\begin{align*}
&f_e(g_2)=(0,0,z)\text{ and }f_e\text{ is an isomorphism between }G_{e''}\text{ and }H_{e''}\\
\implies &\hat{t}_{21}=0\text{ and }f_e\text{ is an isomorphism between }G_{e''}\text{ and }H_{e''}\\
\implies &f_e(g_1)=\hat{t}_{11}h_1+(2\hat{t}_{11}+5\hat{t}_{12}-\hat{t}_{22})\hat{h}_{2}\text{ where }\hat{t}_{11},\hat{t}_{22}=\pm1\quad\text{(by (\ref{det constraint}))}\\
\implies&f_e(g_1)=\hat{t}_{11}h_1+\left(0,0,\frac{2\hat{t}_{11}+5\hat{t}_{12}-\hat{t}_{22}}{n}\right)\text{ where }5|n\text{ and }2\hat{t}_{11}+5\hat{t}_{12}-\hat{t}_{22}\not\equiv 0\pmod 5
\end{align*}

We write $(x',y',z')$ as $(x',y',z')=mh_1+\left(0,0,\frac{p}{q}\right)$ for some $m,p,q\in\mathbb{Z}$ and $\gcd(p,q)=1$. If $5|q$, then $(x',y',z')\notin H_{e'}$, so $f_e$ must be an isomorphism between $G_{e''}$ and $H_{e''}$. Hence, $e\in K$. Otherwise, $e\notin K$.

\end{proof}

Finally, combining the results proved in sections 2 and 3, we can prove our main theorem \ref{main theorem}, which is restated below.

\begin{theorem}
We are interested in the setting where $E$ is a computable abelian group, $E = A \, \oplus \, G = B \oplus H$, and $A$ and $B$ are isomorphic and finitely generated.

\begin{enumerate}
    \item There is a $\mathbf{0}'$-computable function $F(e,a,b,g,h,r)$ such that if $\mathcal{G}_e$ is a computable abelian group $E$, and $C_a$, $C_b$, $C_g$, $C_h$ are computable sets that when inheriting the group operation from $E$ are themselves groups $A$, $B$, $G$, and $H$, satisfying $E = A \oplus G = B \oplus H$,  and the groups $A$ and $B$ are isomorphic and finitely generated with rank $r$, then $\varphi_{F(e,a,b,g,h,r)}$ is an isomorphism from $G$ to $H$.
    \item Suppose that $F$ is a function such that whenever $\mathcal{G}_e$ is a computable abelian group $E$, and $C_a$, $C_b$, $C_g$, $C_h$ are computable sets that when inheriting the group operation from $E$ are themselves groups $A, B, G,$ and $H$, satisfying $E = A \oplus G = B \oplus H$, and the groups $A$ and $B$ are isomorphic and finitely generated with rank $r$, we  have that $\varphi_{F(e,a,b,g,h,r)}$ is an isomorphism from $G$ to $H$. Then $\emptyset' \leq_T F$. 
\end{enumerate}
\end{theorem}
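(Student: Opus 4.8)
The plan is to establish the final statement simply by assembling the two independent halves that have already been proven in the body of the paper. The main theorem is a conjunction: part (1) asserts the existence of a $\mathbf{0}'$-computable function $F$ that uniformly outputs an isomorphism from $G$ to $H$, and part (2) asserts that any such $F$ computes $\emptyset'$. Part (1) is precisely the content of Theorem \ref{main theorem part 1}, and part (2) is precisely the content of Theorem \ref{part(2) with rank}. Thus the proof is a short bookkeeping argument: I would invoke Theorem \ref{main theorem part 1} to discharge (1) and Theorem \ref{part(2) with rank} to discharge (2), noting that together they pin the degree of uniformity to exactly $\mathbf{0}'$.

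Concretely, for part (1) I would observe that Theorem \ref{main theorem part 1} produces, via the fundamental theorem of finitely generated abelian groups (Lemma \ref{cyclic decomposition}) and the cyclic case (Theorem \ref{cyclic theorem}), a $\mathbf{0}'$-computable function with exactly the input signature $(e,a,b,g,h,r)$ demanded by the statement. The only cosmetic discrepancy is that Theorem \ref{main theorem part 1} writes $\varphi_{F(e,a,b,g,h,r,i)}$ with a stray index $i$; I would simply remark that this is the same function and the extra symbol plays no role, so the output index $F(e,a,b,g,h,r)$ is as required. For part (2) I would note that Theorem \ref{part(2) with rank} gives the reduction $\emptyset' \leq_T F$ under the identical hypotheses, completing the conjunction.

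I would also take the opportunity to emphasize the sharpness that the two bounds jointly yield: since part (1) shows $F$ can be taken $\mathbf{0}'$-computable and part (2) shows every such $F$ computes $\emptyset'$, the exact degree of uniformity is $\mathbf{0}'$, answering Deveau's question. If desired, I would further point out that Theorem \ref{part(2) with generators} strengthens the lower bound by fixing the generators in advance, so the $\mathbf{0}'$ lower bound is robust and does not rely on withholding generator information from $F$.

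The honest assessment is that there is no real obstacle here, because the substantive mathematics lives entirely in the earlier results. The upper bound required the genuinely nontrivial work of Lemmas \ref{decomposing G and H}, \ref{find u and v}, \ref{existence of computable isomorphism}, and \ref{cyclic decomposition}, and the lower bound required the coding constructions of Theorems \ref{part(2) with rank} and \ref{part(2) with generators}. At this final stage the only thing to verify is that the hypotheses of the two component theorems match the hypotheses of the combined statement verbatim, which they do; the proof is therefore essentially a one-line citation of each half.
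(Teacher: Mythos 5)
Your proposal is correct and matches the paper's own proof, which likewise discharges part (1) by citing Theorem \ref{main theorem part 1} and part (2) by citing Theorem \ref{part(2) with rank}. Your side remarks about the stray index $i$ and the sharpness of the $\mathbf{0}'$ bound are accurate but not needed for the argument.
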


\begin{proof}
Part 1 follows immediately from Theorem \ref{main theorem part 1}, and part 2 follows immediately from Theorem \ref{part(2) with rank}.
\end{proof}

\section{Open Problems}




There remain several questions about whether or not the theorems proven in this paper can be strengthened or modified. Recall that in Theorem \ref{part(2) with rank}, the abelian groups $A,B,G,H$ that we construct are finitely generated; in fact, they are all infinite cyclic groups. 

\begin{question}Does there exist a similar construction for Theorem \ref{part(2) with generators}? That is, if we require $G$ and $H$ to be finitely generated abelian groups, 
will the resulting $F$ still compute $\emptyset'$, or will it be of strictly lower Turing degree? 
\end{question}

In Theorem 3.1, Theorem 3.2, and Theorem 2.1, we required the rank to be given ahead of time. 

\begin{question} Do these results still hold if the rank is not known? In particular, can the hypothesis of lemma \ref{cyclic decomposition} be weakened to do away with requiring the rank to be given? 
\end{question}

In the case where the above is possible, one might also ask when we can do away with being given the generators ahead of time as well. 

\begin{question}What, if any, conditions do we require to $\mathbf{0'}$-computably find the generators of a computable finitely generated abelian group?
\end{question}





\bibliographystyle{unsrt}
\bibliography{Reference}

\end{document}